\newtheorem{theorem}{Theorem}[section]
\newtheorem{corollary}[theorem]{Corollary}
\newtheorem{definition}[theorem]{Definition}
\newtheorem{lemma}[theorem]{Lemma}
\title{Asymptotics for cliques in scale-free random graphs}
\author{Fraser Daly\footnote{Department of Actuarial Mathematics and Statistics, Heriot-Watt University, Edinburgh EH14 4AS, UK.  E-mail: f.daly@hw.ac.uk}, Alastair Haig\footnote{Department of Actuarial Mathematics and Statistics, Heriot-Watt University, Edinburgh EH14 4AS, UK.  E-mail: ahh6@hw.ac.uk} and Seva Shneer\footnote{Department of Actuarial Mathematics and Statistics, Heriot-Watt University, Edinburgh EH14 4AS, UK.  E-mail: v.shneer@hw.ac.uk} }
\date{\today}
\begin{document}

\maketitle

\noindent{\bf Abstract} 
In this paper we establish asymptotics (as the size of the graph grows to infinity) for the expected number of cliques in the Chung--Lu inhomogeneous random graph model in which vertices are assigned independent weights which have tail probabilities $h^{1-\alpha}l(h)$, where $\alpha>2$ and $l$ is a slowly varying function. Each pair of vertices is connected by an edge with a probability proportional to the product of the weights of those vertices. We present a complete set of asymptotics for all clique sizes and for all non-integer $\alpha > 2$. We also explain why the case of an integer $\alpha$ is different, and present partial results for the asymptotics in that case.
\vspace{12pt}

\noindent{\bf Key words and phrases:} Chung--Lu model; inhomogeneous random graph; clique; slowly varying function

\vspace{12pt}

\noindent{\bf MSC 2020 subject classification:} 05C80; 60F05

\section{Introduction}

Scale-free networks are ubiquitous in the modern world. In such networks, the number of nodes with degree $k$ decays slowly for large $k$, so that there are a few nodes with extremely large degrees, even when the average degree is relatively small. In many cases (including the model we consider here) the proportion of nodes with specific degrees behaves similarly to $k^{-\alpha}$ for some exponent $\alpha$. The Internet, the IMDB movie collaboration network and even some semantic networks are all said to be examples of such networks \cite{hofstad_2016}.

Recall that a clique in a graph is a subset of vertices that form a complete subgraph. In this paper we study how, in a scale-free graph, the expected number of cliques of a given size varies asymptotically as $n$ (the number of nodes) grows, as well as the intrinsically linked probability of a given set of nodes comprising a clique.
Janssen, van Leeuwaarden and Shneer \cite{Janssen_2019} have studied this for the power-law exponent in the interval $(2,3)$. Here we extend those findings to exponents in $(2,\infty)$ to find that the asymptotics not only depend on the clique size $k$ and the power-law exponent $\alpha$, but also their relation to each other and, in some cases, whether $\alpha$ is an integer. This provides a baseline, or null, growth rate against which to compare the expected number of cliques in given data or models as a potential measure of connectedness or of community or anti-community structure.

We use the rank-1 inhomogeneous graph model (also known as the hidden-variable model) \cite{PhysRevE.68.036112,Bollob_s_2007,Britton_2006,Chung15879,norros_reittu_2006,PhysRevE.70.066117} or, more specifically, the Chung--Lu variant of the rank-1 inhomogeneous graph model \cite{Chung15879} with power-law exponent $\alpha$ in the range $(2,\infty)$. In this model (which is fully described in Section \ref{theModel}) each node is assigned a weight from a distribution with close to power-law tails and, conditional on these weights, edges exist independently of each other in such a way that the expected degree of each node is close to its weight \cite{PhysRevE.68.036112}. For large enough $n$ this results in the expected degree sequence of the graph matching the close to power-law pattern observed in real world scale-free networks \cite{Voitalov_2019}. Further studies into this model have also revealed that degree correlations, average connectivity and clustering coefficients match the observed values of these properties in various real life networks \cite{stegehuis2017degree,PhysRevE.96.042309}. 

\subsection{Relation to existing work}

One of the primary difficulties encountered in using this model is the minimum function present in the expression for edge probabilities; see (\ref{edgeProb}) in Section \ref{theModel} below, where the average weight parameter $\mu$ of the model is also introduced. A way to circumvent this is to truncate the support of the weight distribution at $\sqrt{\mu n}$, as opposed to the infinite support we use. In \cite{Bianconi_2006}, Bianconi and Marsili address this case and obtain asymptotics for the clique counts. Notably this cutoff case is identical to one of the `extreme cases' described in Section \ref{sec:main} below.

As mentioned above, \cite{Janssen_2019} sees Janssen, van Leeuwaarden and Shneer obtain sharp asymptotics for the weight distribution with infinite support, but restrict their view to power-law exponents in $(2,3)$. Some of the methodologies used here for power-law exponents in $(2,\infty)$ are generalisations of those seen in that paper.

An alternative line of work is considered by Van der Hofstad \textit{et al.} \cite{hofstad2017optimal}, who investigate the optimal composition for the most likely subgraph. Notably, they showed that for many subgraphs (including cliques) the optimal composition contains entirely nodes with degree of order $\sqrt{n}$ and that, up to leading order, said compositions determine how the expected number of copies of a given subgraph varies as $n$ increases. Our results are consistent with these findings, as we see that in the case $k > \alpha$ the asymptotics correspond to nodes with weights near the boundary $\sqrt{\mu n}$.

\subsection{Outline of this paper}

In Section \ref{theModel}, we formally introduce the model we study and some notation we will use throughout the subsequent work. Our main results are stated and proved in Section \ref{sec:main}, with proofs of some lemmas deferred until Appendix \ref{InitProofs}. The main results of this paper apply in the case where $\alpha$ is not an integer. Some remarks on this and an investigation of the case of integer $\alpha$ are given in Section \ref{alphaIntSec}.

\section{Model and notation} \label{theModel}

We will be using the Chung--Lu version of the rank-1 inhomogeneous graph model \cite{Chung15879} with $n$ nodes. Node $i$ is assigned weight $H_i$, where $(H_1, H_2, \ldots, H_n)$ is as an i.i.d. sample from a random variable $H$ with tail distribution
\begin{align}
    \overline{F}(h) = \mathbb{P}\left( H > h \right) = h^{1-\alpha} l(h), \quad h \geq 1\,,
\end{align}
for some value $\alpha \in \left(2, \infty \right)$ and $l(h)$ which is slowly-varying. Here we define slowly-varying functions in the Karamata sense \cite{bingham_goldie_teugels_1987}, that is, $l$ is slowly varying if
\begin{align}
   \lim_{x\rightarrow \infty} \frac{l(\lambda x)}{l(x)} = 1\,,
\end{align}
for all $\lambda>0$.

A pair of nodes $(i,j)$ with weights $(h_i,h_j)$, conditional on their weights being $h_i$ and $h_j$, is connected by an edge, independently of everything else, with probability
\begin{align} \label{edgeProb}
    p_{i,j} = \min \left\{ \frac{h_ih_j}{\mu n}, 1\right\}\,,
\end{align}
where $\mu = \mathbb{E}(H)$ is a parameter of the model, thought of as an average weight.

We use this model due to its well-known property that the expected degree of a given node, conditional on its own weight, is close to that weight. Hence, we are able to see that for large enough graphs and large enough degrees, the expected degree sequence resembles  $h^{-\alpha}l(h)$ -- the `close-to power-law' distribution we would expect from a scale-free graph.

\subsection{Notation}

We list here some notation that we will use throughout the work that follows. For functions $f,g:\mathbb{R}\to\mathbb{R}$,
\begin{enumerate}
\item we write $f(n) \sim g(n)$ if $\frac{f(n)}{g(n)} \rightarrow 1$ as $n \rightarrow \infty$.
\item we write $f(n) \asymp g(n)$ if there exist constants $n_0 \geq 0$ and $0 < C_1 < C_2 < \infty$ such that, for all $n > n_0$,
    \begin{align*}
        C_1g(n) \leq f(n) \leq C_2g(n)\,.
    \end{align*}
Note that if $f(n) \sim Bg(n)$ for some positive constant $B$, then $f(n) \asymp g(n)$.
\item we write $f(n) \lesssim g(n)$ if there exists a constant $n_0 \geq 0$ and a function $h(n)$ such that, for all $n > n_0$, $f(n) \leq h(n)$ and $h(n) \asymp g(n)$.
\end{enumerate}

\section{Main results} \label{sec:main}

In this section we state and prove our main results, asymptotics for the expected number of cliques in the random graph model defined in Section \ref{theModel} above. Let $A_k(n)$ denote the expected number of cliques of size $k$ in this model. We are interested in how, for fixed $\alpha$, $\mu$ and $k$, and a fixed, slowly-varying function $l$, $A_k(n)$ varies asymptotically with $n$. 

Letting $S_k$ denote the set of all subsets of $\left\{1,2,\ldots, n \right\}$ of size $k$, $K_{\mathbf{s}}$ denote the event that nodes $\mathbf{s} = \left\{ s_1,s_2,\ldots,s_k \right\}$ form a clique, and $K_k$ denote the event that the specific nodes $\left\{ 1,2,\ldots k \right\}$ form a clique, we note that
\begin{equation}
A_k(n) = \mathbb{E} \left[ \sum_{\mathbf{s} \in S_k} \mathbb{I}\left( K_{\mathbf{s}}\right)\right]
= \sum_{\mathbf{s} \in S_k} \mathbb{P} \left( K_{\mathbf{s}}\right)
= {n \choose k} \mathbb{P} \left( K_{k}\right) 
\asymp n^k  \mathbb{P} \left( K_{k}\right)\,, \label{AkPKkLink}
\end{equation}
where $\mathbb{I}\left( A \right)$ is an indicator function for the event $A$. We therefore focus primarily on how $\mathbb{P}\left( K_k \right)$ varies asymptotically with increasing $n$.

Our main result is Theorem \ref{alphaNonInt} below. This gives sharp asymptotics for $\mathbb{P}(K_k)$ in the case where $\alpha$ is not an integer; we will discuss the case of integer $\alpha$ in Section \ref{alphaIntSec}. 

\begin{theorem} \label{alphaNonInt}
For $\alpha \in (2,\infty) \setminus \mathbb{Z}$ and $k \geq 2$,
\begin{align}
    \mathbb{P} \left( K_k \right) \asymp \begin{cases} n^{\frac{k}{2}(1-k)} & \text{ when } k<\alpha\,. \\
    n^{\frac{k}{2}(1-\alpha)} l \left( \sqrt{n} \right)^{k} & \text{ when } k>\alpha\,.
    \end{cases}
\end{align}
\end{theorem}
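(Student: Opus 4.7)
My starting point is the identity
\begin{align*}
\mathbb{P}(K_k) = \mathbb{E}\left[\prod_{1 \leq i < j \leq k} p_{ij}\right],
\end{align*}
which, by independence of the weights, is a $k$-fold integral against the distribution of $H$. The central difficulty throughout is the minimum in $p_{ij}$: depending on whether $H_i H_j$ exceeds $\mu n$, $p_{ij}$ equals either $1$ or $H_i H_j/(\mu n)$, and the regime of the asymptotic is dictated by which of these is the typical value on the weight configurations that dominate the integral. The threshold $\sqrt{\mu n}$ for a single weight is the natural place to split.

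For $k < \alpha$, the upper bound is immediate from $\min(x,1) \leq x$: one obtains $\mathbb{P}(K_k) \leq (\mu n)^{-k(k-1)/2} (\mathbb{E}[H^{k-1}])^k$, and the $(k-1)$th moment is finite precisely because $k-1 < \alpha - 1$. For the matching lower bound I would restrict the expectation to the event $\{H_i \leq M : 1 \leq i \leq k\}$ for some fixed constant $M$. For $n$ large enough $M^2 < \mu n$, so on this event $p_{ij} = H_i H_j/(\mu n)$ exactly, and $\mathbb{P}(K_k) \geq (\mu n)^{-k(k-1)/2} \mathbb{E}[H^{k-1} \mathbb{I}(H \leq M)]^k$; taking $M$ large makes the truncated moment strictly positive.

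For $k > \alpha$, the lower bound is equally clean: on $\{H_i \geq \sqrt{\mu n} : 1 \leq i \leq k\}$ every pairwise product $H_i H_j$ is at least $\mu n$, so every $p_{ij}=1$, and therefore $\mathbb{P}(K_k) \geq \overline{F}(\sqrt{\mu n})^k$. Using $\overline{F}(h) = h^{1-\alpha} l(h)$ and slow variation of $l$, this has exactly the order $n^{k(1-\alpha)/2} l(\sqrt{n})^k$ claimed.

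The real work is the upper bound for $k > \alpha$, which I expect to be the principal obstacle: the crude bound $p_{ij} \leq 1$ discards small-weight information, while $p_{ij} \leq H_i H_j/(\mu n)$ invokes the now-divergent moment $\mathbb{E}[H^{k-1}]$. My plan is to partition the sample space according to the set $T \subseteq \{1, \ldots, k\}$ of indices with $H_i > \sqrt{\mu n}$ and to bound the expectation of $\prod p_{ij}$ restricted to each of the $2^k$ cells. On such a cell the within-$T$ edges have $p_{ij}=1$ exactly, the within-$T^c$ edges have $p_{ij}=H_iH_j/(\mu n)$ exactly, and only the cross edges between $T$ and $T^c$ are delicate. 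Choosing the bound $p_{ij} \leq 1$ or $p_{ij} \leq H_iH_j/(\mu n)$ on cross edges adaptively with $|T|$, one reduces each cell to a product of truncated moments $\mathbb{E}[H^m \mathbb{I}(H \leq \sqrt{\mu n})]$ and tail moments $\mathbb{E}[H^m \mathbb{I}(H > \sqrt{\mu n})]$, each evaluated asymptotically via Karamata's theorem. Summing, every cell turns out to contribute at most of the target order $n^{k(1-\alpha)/2} l(\sqrt{n})^k$, the dominant contribution coming from $T = \{1,\ldots,k\}$. The non-integer hypothesis on $\alpha$ enters precisely here: the exponents of the form $m - \alpha$ arising in these moment integrals are all bounded away from $-1$, so none of them produces an extra logarithmic factor that would spoil the matching of upper and lower bounds. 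I expect the bookkeeping across the partitions, and in particular the correct choice of cross-edge bound on cells with $|T| \approx k - \alpha$, to be the main technical difficulty.
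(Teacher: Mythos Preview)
Your lower bounds and your treatment of $k<\alpha$ are correct; for $k<\alpha$ your global bound $p_{ij}\le H_iH_j/(\mu n)$ together with $\mathbb{E}[H^{k-1}]<\infty$ is in fact more direct than the paper's route, which pushes the upper bound through the same cell-by-cell decomposition it needs for $k>\alpha$.

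The gap is in the upper bound for $k>\alpha$, at exactly the cell you flagged. Write $m=|T^c|$ for the number of low weights. Bounding all cross edges by $1$ handles $m>\alpha$, and bounding all by $H_iH_j/(\mu n)$ handles $m<\alpha-1$ (the tail moment $\mathbb{E}\left[H^{m}\,\mathbb{I}(H>\sqrt{\mu n})\right]$ is then finite and Karamata gives the target order). But at $m=\lfloor\alpha\rfloor$ neither works: all-ones overshoots by a factor $n^{m(\alpha-m)/2}$, while all-$H_iH_j/(\mu n)$ forces an infinite tail moment since $m>\alpha-1$. Allowing an edge-by-edge choice does not close the gap either. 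A short computation shows that any such factorized bound overshoots the target $n$-exponent by exactly $\tfrac12(m-a)(\alpha-m)$, where $a$ is the number of low-weight vertices incident to at least one cross edge bounded by $H_iH_j/(\mu n)$; hitting the target therefore forces $a=m$, while finiteness of each tail moment forces every high-weight vertex to have at most $m-1$ such incident edges. When $k=\lfloor\alpha\rfloor+1$ (so $|T|=1$) these two constraints are incompatible, and no product-of-moments bound can give the sharp order for that $k$.

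The paper's resolution for the cells with $m<\alpha$ is structurally different: rather than bounding cross edges, it integrates over the high-weight node $H_j$ exactly, splitting the range at the thresholds $\mu n/h_s$ determined by the ordered low weights $h_1\ge\cdots\ge h_m$. This yields a sum of $m+1$ explicit terms in the $h_i$ (Lemma~\ref{middleExpansionLem}), and the bound then follows from an inductive estimate on the nested integrals $\overline{J}_{1,m}$ of the $(k-m)$th power of that sum (Lemmas~\ref{JLemo1}--\ref{JLemo2}), not from any product of one-dimensional moments.
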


The following corollary is immediate from (\ref{AkPKkLink}).

\begin{corollary} \label{AkalphaNonInt}
For $\alpha \in (2,\infty) \setminus \mathbb{Z}$ and $k \geq 2$,
\begin{align}
    A_k(n) \asymp \begin{cases} n^{\frac{k}{2}(3-k)} & \text{ when } k<\alpha\,, \\
    n^{\frac{k}{2}(3-\alpha)} l \left( \sqrt{n} \right)^{k} & \text{ when } k>\alpha\,.
    \end{cases}
\end{align}
\end{corollary}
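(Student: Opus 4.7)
The starting point is the conditional independence of edges given the weights, which gives
\begin{align*}
\mathbb{P}(K_k)=\mathbb{E}\left[\prod_{1\leq i<j\leq k}\min\left\{\frac{H_iH_j}{\mu n},1\right\}\right].
\end{align*}
The minimum in the edge probability transitions at the surface $h_ih_j=\mu n$, so the natural threshold on each individual weight is $T=\sqrt{\mu n}$: if every weight is below $T$ no minimum saturates, while if two weights both exceed $T$ the corresponding edge is present with probability one. The plan is to split the integration region by recording which subset $J\subseteq\{1,\ldots,k\}$ has $H_i>T$ and to estimate each of the $2^k$ resulting contributions separately, repeatedly invoking Karamata's theorem to evaluate truncated power moments of $H$.

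For $k<\alpha$ both bounds are straightforward and the tail plays no role. Dropping the minimum gives
\begin{align*}
\mathbb{P}(K_k)\leq(\mu n)^{-\binom{k}{2}}\mathbb{E}[H^{k-1}]^k\asymp n^{k(1-k)/2},
\end{align*}
with $\mathbb{E}[H^{k-1}]$ finite precisely because $k-1<\alpha-1$; restricting the domain of integration to $\{H_i\in[1,M]\}^k$ for a large fixed $M$ removes all truncation once $M^2\leq\mu n$ and delivers a matching lower bound. For $k>\alpha$ the lower bound follows from restricting to $\{H_i>T\}^k$, on which every minimum equals $1$, giving $\overline{F}(T)^k\asymp n^{k(1-\alpha)/2}l(\sqrt{n})^k$ by regular variation. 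The matching upper bound is the substantive content. For a subset $J$ with $|J|=j$ I would bound the $\binom{j}{2}$ edges inside $J$ by $1$, keep the $\binom{k-j}{2}$ edges inside $J^c$ as $H_iH_{j'}/(\mu n)$ without truncation (legitimate since both endpoints lie below $T$), and on the $j(k-j)$ mixed edges use $\min\{x,1\}\leq x^\beta$ for a $\beta\in[0,1]$ chosen to depend on $j$. The resulting integral factorises over vertices into truncated moments $\mathbb{E}[H^m\mathbb{I}(H\leq T)]$ and $\mathbb{E}[H^m\mathbb{I}(H>T)]$, each asymptotic by Karamata's theorem to $T^{m+1-\alpha}l(T)$ on its divergent side; the identity $\binom{k-j}{2}+j(k-j)=\binom{k}{2}-\binom{j}{2}$ then lets the $n$- and $l(\sqrt{n})$-exponents collapse to the target $n^{k(1-\alpha)/2}l(\sqrt{n})^k$ for every $j$.

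The main obstacle is the choice of $\beta$ for intermediate $j$. The extreme choices $\beta=0$ and $\beta=1$ keep the two truncated moments on the correct sides of the boundary $m=\alpha-1$ only for $j<k-\alpha$ and $j>k-\alpha+1$ respectively, leaving an open interval of length one in between; for $\alpha\notin\mathbb{Z}$ this interval contains exactly one integer, namely $j^\star=\lceil k-\alpha\rceil$, and an intermediate $\beta\in\bigl((\alpha-k+j^\star)/j^\star,\,(\alpha-1)/(k-j^\star)\bigr)$ must be chosen there. A short algebraic check shows that this interval is non-empty precisely because $\alpha$ is non-integer, and with such a $\beta$ both truncated moments are cleanly regularly varying without logarithmic corrections. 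This is where the non-integer hypothesis is genuinely used; when $\alpha\in\mathbb{Z}$ some moment sits exactly at $m=\alpha-1$, Karamata supplies an additional slowly varying logarithmic factor, and the asymptotic form of $\mathbb{P}(K_k)$ changes, which is the phenomenon treated in Section \ref{alphaIntSec}.
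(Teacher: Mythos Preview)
Your proposal is correct, but note that in the paper Corollary~\ref{AkalphaNonInt} is literally one line: it follows from Theorem~\ref{alphaNonInt} via $A_k(n)\asymp n^k\mathbb{P}(K_k)$. What you have written is really an alternative proof of Theorem~\ref{alphaNonInt}, and there your route is genuinely different from the paper's. The paper orders the low weights $h_1\geq\cdots\geq h_m$, expands the inner integral over the single high weight into a sum of regularly varying pieces (Lemma~\ref{middleExpansionLem}), and then pushes this through an $m$-fold nested integral by induction (Lemmas~\ref{JLemo1}--\ref{JLemo4}), treating several sub-cases. You instead bound every mixed edge by $\min\{x,1\}\le x^\beta$ with a single $\beta\in[0,1]$, which factorises the whole integral into independent truncated moments $\mathbb{E}[H^m\mathbb{I}(H\le T)]$ and $\mathbb{E}[H^m\mathbb{I}(H>T)]$; the $\beta$-dependence then cancels in the exponent of $n$, and the only work is to check that the admissible interval for $\beta$ is non-empty at the one critical value $j^\star=\lceil k-\alpha\rceil$. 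This is a cleaner and more elementary argument than the paper's nested-integral induction, and it makes the role of the non-integrality of $\alpha$ very transparent (the interval $\bigl((\alpha-k+j^\star)/j^\star,(\alpha-1)/(k-j^\star)\bigr)$ collapses exactly when $\alpha\in\mathbb{Z}$). The paper's approach, on the other hand, tracks the contribution of each ordered low weight more precisely, which is what allows it to extract the refined upper bounds involving $Q(n)$ in the integer-$\alpha$ case of Section~\ref{alphaIntSec}; your global $x^\beta$ bound would be too coarse to see that structure.
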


Notably, we see here that if $\alpha > 3$ then the average number of cliques of size larger than $3$ decreases with $n$. That is, the graph becomes more and more sparse as it increases in size. 

We use the remainder of this section to prove Theorem \ref{alphaNonInt}. Recalling (\ref{edgeProb}), note that if $h_i,h_j \leq \sqrt{\mu n}$, then $p_{ij}= \frac{h_ih_j}{\mu n}$, and if $h_i,h_j > \sqrt{\mu n}$ then $p_{ij} = 1$, so we may condition on whether the weights of certain nodes are less than or greater than $\sqrt{\mu n}$. At this stage nodes are interchangeable, so we may condition only on the number of nodes with weights less than or greater than $\sqrt{\mu n}$. Hence, we can write
\begin{equation}\label{eq:cond}
    \mathbb{P} \left( K_k \right) = \mathbb{P} \left( K_k, H_i \leq \sqrt{\mu n}, 1 \leq i \leq k \right) + \sum_{m=1}^{k-1} {k \choose m} I_m +  \mathbb{P} \left( K_k, H_j > \sqrt{\mu n}, 1 \leq j \leq k \right)\,,
\end{equation}
where $I_m = \mathbb{P}\left( K_k, H_i \leq \sqrt{\mu n}, H_j > \sqrt{\mu n}, 1 \leq i \leq m, m < j \leq k \right)$. We refer to the first and last terms on the right-hand side of (\ref{eq:cond}) as the `extreme cases' and the remaining terms as the `intermediate cases', as in \cite{Janssen_2019}.

Consider the extreme cases, starting with the case where every node has weight greater than $\sqrt{\mu n}$. Since, in this case, every edge exists with probability $1$, we may disregard them altogether to write
\begin{multline}
    \mathbb{P} \left( K_k, H_j > \sqrt{\mu n}, 1 \leq j \leq k \right) = \mathbb{P} \left( H_j > \sqrt{\mu n}, 1 \leq j \leq k \right)
   = \mathbb{P} \left( H_1 > \sqrt{\mu n} \right)^k\\
      = \overline{F}(\sqrt{\mu n})^k
    = \mu^{\frac{k}{2}(1-\alpha)}n^{\frac{k}{2}(1-\alpha)}l(\sqrt{\mu n})^k 
    \asymp n^{\frac{k}{2}(1-\alpha)}l(\sqrt{n})^k\,. \label{extremeCase1}
\end{multline}
For the other extreme case, we are able to calculate the asymptotics directly:
\begin{align}
    \mathbb{P} \left( K_k, H_i \leq \sqrt{\mu n}, 1 \leq i \leq k \right) &= \int_1^{\sqrt{\mu n}} \ldots \int_1^{\sqrt{\mu n}} \prod_{1\leq i < j \leq k} \frac{h_ih_j}{\mu n}\ dF(h_k)\ldots dF(h_1) \nonumber \\
    &= \mu^{\frac{k}{2}(1-k)} n^{\frac{k}{2}(1-k)} \int_1^{\sqrt{\mu n}} \ldots \int_1^{\sqrt{\mu n}} \prod_{i=1}^k h_i^{k-1} \ dF(h_k)\ldots dF(h_1) \nonumber \\
    &\asymp n^{\frac{k}{2}(1-k)} \left( \int_1^{\sqrt{\mu n}} h_1^{k-1}\ dF(h_1)\right)^k \nonumber \\
    & \asymp \begin{cases} n^{\frac{k}{2}(1-k)} &  \text{ when } k < \alpha\,, \\ n^{\frac{k}{2}(1-\alpha)} l(\sqrt{n})^k & \text{ when } k > \alpha\,.
    \end{cases} \label{extremeCase2}
\end{align}
In the last step above we made use of Lemma \ref{fixedAsymps} below. Note that if $k<\alpha$ then $n^{\frac{k}{2}(1-k)}$ dominates $n^{\frac{k}{2}(1-\alpha)}l(\sqrt{n})^k$, so this extreme case asymptotically dominates that considered in (\ref{extremeCase1}).

From these arguments, we see that the asymptotics presented in Theorem \ref{alphaNonInt} are at least a lower bound for the asymptotics of $\mathbb{P}\left( K_k \right)$. In Section \ref{alphaNonIntProof} below, we prove that when $\alpha$ is non-integer, each of the intermediate cases are also bounded above by these asymptotics, so they are also an asymptotic upper bound (and hence a sharp asymptotic) for $\mathbb{P} \left( K_k \right)$.

\subsection{Preliminary lemmas}\label{sec:lemmas}

Before we consider upper bounds for the intermediate cases, and thus complete the proof of Theorem \ref{alphaNonInt}, we first state some useful results, the proofs of which can be found in Appendix \ref{InitProofs}.

\begin{lemma} \label{fixedAsymps}
Let $\alpha,\beta,X$ be real constants, $l(h)$ be a slowly varying function which is locally bounded for $h>h_0$, and $x(n)$ and $y(n)$ be non-decreasing functions such that, for all $n\geq1$,
\begin{align*}
    h_0 \leq X \leq x(n) \leq y(n) < \infty  
    \text{ and } \lim_{n \rightarrow \infty} x(n) = \infty\,.
\end{align*}
Let $F(h) = 1 - \overline{F}(h) = 1 - h^{1-\alpha}l(h)$. Then
\begin{align}
    \int_X^{x(n)} h^\beta\ dF(h) &\asymp \begin{cases} x(n)^{\beta-\alpha+1} l(x(n)) & \text{ when } \beta -\alpha > -1\,, \\
1 & \text{ when } \beta -\alpha < -1\,, 
\end{cases}  \label{fixedBottom} \\
\int_{x(n)}^{y(n)} h^\beta\ dF(h) &\asymp \begin{cases} y(n)^{\beta-\alpha+1} l(y(n)) & \text{ when } \beta -\alpha > -1\,, \\
x(n)^{\beta-\alpha+1} l(x(n)) & \text{ when } \beta -\alpha < -1\,. 
\end{cases}  \label{fixedNone}
\end{align}
\end{lemma}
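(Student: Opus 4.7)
The approach is integration by parts followed by Karamata's theorem on integrals of regularly varying functions. For any $1 \leq a < b$, integrating by parts with $dF(h) = -d\overline{F}(h)$ yields
\begin{equation*}
\int_a^b h^\beta\, dF(h) \;=\; a^\beta \overline{F}(a) - b^\beta \overline{F}(b) + \beta \int_a^b h^{\beta-1} \overline{F}(h)\, dh \;=\; a^{\beta-\alpha+1}l(a) - b^{\beta-\alpha+1}l(b) + \beta \int_a^b h^{\beta-\alpha}l(h)\, dh,
\end{equation*}
using $\overline{F}(h) = h^{1-\alpha}l(h)$ in the second equality. The integrand on the right is regularly varying of index $\beta - \alpha$, and its asymptotics split cleanly according to whether $\beta - \alpha > -1$ or $\beta - \alpha < -1$.

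When $\beta - \alpha > -1$, Karamata's theorem (Chapter 1 of \cite{bingham_goldie_teugels_1987}) gives $\int_1^x h^{\beta-\alpha}l(h)\, dh \sim \frac{x^{\beta-\alpha+1}l(x)}{\beta-\alpha+1}$, and $h \mapsto h^{\beta-\alpha+1}l(h)$ is regularly varying with positive exponent, hence eventually tends to infinity. For (\ref{fixedBottom}), the $a = X$ contribution is bounded by a constant and the $b = x(n)$ terms combine to give $\frac{\alpha-1}{\beta-\alpha+1} x(n)^{\beta-\alpha+1}l(x(n))$, a positive constant multiple (since $\alpha > 1$) of the asserted order. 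For (\ref{fixedNone}), the identity reduces to a linear combination of $x(n)^{\beta-\alpha+1}l(x(n))$ and $y(n)^{\beta-\alpha+1}l(y(n))$ in which the upper endpoint term dominates, since $h^{\beta-\alpha+1}l(h)$ is regularly varying with positive exponent and $y(n) \geq x(n) \to \infty$.

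When $\beta - \alpha < -1$, the function $h^{\beta-\alpha}l(h)$ is integrable at infinity and the boundary term $h^{\beta-\alpha+1}l(h) \to 0$. For (\ref{fixedBottom}), both the $b = x(n)$ boundary contribution and the Lebesgue integral converge to finite nonzero limits as $n \to \infty$, so $\int_X^{x(n)} h^\beta\, dF(h) \asymp 1$. For (\ref{fixedNone}), the surviving contribution is the lower boundary term $x(n)^{\beta-\alpha+1}l(x(n))$; the tail form of Karamata's theorem, $\int_{x(n)}^\infty h^{\beta-\alpha}l(h)\,dh \sim -\frac{x(n)^{\beta-\alpha+1}l(x(n))}{\beta-\alpha+1}$, confirms that the remaining Lebesgue piece is of the same order and that the $y(n)$ endpoint is negligible.

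The main delicate point is verifying that the constants produced by these combinations have the correct sign, so that the order-$x(n)^{\beta-\alpha+1}l(x(n))$ or $y(n)^{\beta-\alpha+1}l(y(n))$ term genuinely survives rather than being cancelled; this is exactly where the standing assumption $\alpha > 1$ intervenes, making $\alpha - 1$ the positive leading coefficient in each case. Local boundedness of $l$ is all that is needed to invoke the Lebesgue-integral form of Karamata's theorem used throughout.
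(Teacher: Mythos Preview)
Your proof is correct and follows essentially the same route as the paper: integrate by parts to obtain the boundary terms $a^{\beta-\alpha+1}l(a)-b^{\beta-\alpha+1}l(b)$ plus $\beta\int h^{\beta-\alpha}l(h)\,dh$, then invoke Karamata's theorem (the paper cites Propositions~1.5.8 and~1.5.10 and Theorem~1.5.4 of \cite{bingham_goldie_teugels_1987}) to handle the integral in each regime. If anything you are slightly more careful than the paper's own argument, since you explicitly combine the $b$-boundary term with the Karamata asymptotic to obtain the coefficient $\frac{\alpha-1}{\beta-\alpha+1}$ and note that the standing hypothesis $\alpha>1$ makes it positive; the paper leaves this cancellation check implicit.
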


\begin{definition}
For integers $i$ and $m$ such that $1 \leq i \leq m$, we define the linear functional $J_{i,m}$ from the set of functions on $(n,h_1,h_2,\ldots,h_{i-1},h_i)$ to the set of functions on $(n,h_1,h_2,\ldots,h_{i-1})$ as follows:
\begin{align}
    J_{i,m} \left( g\right)(n,h_1,h_2,\ldots,h_{i-1}) = \int_1^{h_{i-1}} h_i^{m-1} g(n,h_1,h_2,\ldots,h_{i-1},h_i)\ dF(h_i)\,,
\end{align}
where we use the convention that $h_0 = \sqrt{\mu n}$.  We will often write $J_{i,m}(g)$ instead of $J_{i,m}(g)(n,h_1,h_2,\ldots,h_{i-1})$ where it does not create confusion, in order to simplify notation.

We also define linear functional $\overline{J}_{i,m}$ from the set of functions on $(n,h_1,h_2,\ldots,h_m)$ to the set of functions on $(n,h_1,h_2,\ldots,h_{i-1})$ as
\begin{align}
    \overline{J}_{i,m} \left( g \right)(n,h_1,h_2,\ldots,h_{i-1}) = J_{i,m} \left( J_{i+1,m} \left( \ldots J_{m,m} \left( g \right)\ldots \right) \right)\,.
\end{align}
\end{definition}

\begin{corollary} \label{JCor}
If
\begin{align*}
    g(n,h_1,h_2,\ldots,h_{i-1},h_i) \begin{Bmatrix} \asymp \\ \text{or} \\ \lesssim \end{Bmatrix} B(n,h_1,h_2,\ldots,h_{i-1})h_i^\beta\,,
\end{align*}
for some positive function $B$ and $\beta \neq \alpha - 1$, then
\begin{align*}
    J_{i,m} (g) \begin{Bmatrix} \asymp \\ \text{or} \\ \lesssim \end{Bmatrix} \begin{cases} B(n,h_1,h_2,\ldots,h_{i-1}) h_{i-1}^{\beta+m-\alpha}l(h_{i-1}) & \text{ when } \beta + m - \alpha > 0\,, \\
        B(n,h_1,h_2,\ldots,h_{i-1}) & \text{ when } \beta + m - \alpha < 0\,.
    \end{cases}
\end{align*}
\end{corollary}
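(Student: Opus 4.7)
The plan is to reduce Corollary \ref{JCor} to a direct application of Lemma \ref{fixedAsymps}. For the $\asymp$ version, first unpack the hypothesis: there exist positive constants $0 < C_1 < C_2$ and a threshold beyond which $C_1 B(n,h_1,\ldots,h_{i-1}) h_i^\beta \leq g(n,h_1,\ldots,h_i) \leq C_2 B(n,h_1,\ldots,h_{i-1}) h_i^\beta$. Since $B$ is positive and does not depend on $h_i$, it factors out of the integration defining $J_{i,m}$, leaving
\[ C_j\, B(n,h_1,\ldots,h_{i-1}) \int_1^{h_{i-1}} h_i^{\beta+m-1}\, dF(h_i), \qquad j \in \{1,2\}, \]
as matching lower and upper bounds for $J_{i,m}(g)$.

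Next, apply Lemma \ref{fixedAsymps} to the remaining integral, with exponent parameter $\beta + m - 1$ and with $h_{i-1}$ playing the role of $x(n)$ (recalling $h_0 = \sqrt{\mu n}$, so that for $i = 1$ the upper limit is literally a divergent sequence; for $i > 1$ the conclusion is read as holding uniformly for $h_{i-1}$ sufficiently large, which suffices for the subsequent outer integrations carried out by $\overline{J}_{i,m}$). The condition $(\beta + m - 1) - \alpha > -1$ is equivalent to $\beta + m - \alpha > 0$ and makes the integral $\asymp h_{i-1}^{\beta + m - \alpha} l(h_{i-1})$, producing the first case of the corollary; the opposite inequality $\beta + m - \alpha < 0$ makes the integral $\asymp 1$, producing the second. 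The hypothesis $\beta \neq \alpha - 1$ is imposed to sidestep the borderline case excluded in Lemma \ref{fixedAsymps}.

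For the $\lesssim$ variant, unpack the definition of $\lesssim$ to obtain an auxiliary dominating function $\tilde{g}(n,h_1,\ldots,h_i)$ with $g \leq \tilde{g}$ pointwise and $\tilde{g} \asymp B h_i^\beta$. Monotonicity of the integral gives $J_{i,m}(g) \leq J_{i,m}(\tilde{g})$, and the $\asymp$ case just proved applies to $J_{i,m}(\tilde{g})$, delivering the claimed upper bound in the required $\lesssim$ form. The main (and rather mild) obstacle is checking that the constants in the $\asymp$ conclusion of Lemma \ref{fixedAsymps} are uniform in the upper integration limit $h_{i-1}$ when it is treated as a free variable rather than a deterministic sequence; this is fine because those constants depend only on $\alpha$, $\beta$ and the slowly varying function $l$, so the conclusion holds uniformly for any non-decreasing divergent upper limit. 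Everything else is bookkeeping.
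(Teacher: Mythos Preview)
Your proposal is correct and follows exactly the same approach as the paper: factor the $h_i$-independent function $B$ out of the integral defining $J_{i,m}$, leaving $\int_1^{h_{i-1}} h_i^{\beta+m-1}\,dF(h_i)$, and then invoke Lemma \ref{fixedAsymps} with exponent $\beta+m-1$ to obtain the two cases. The paper compresses this into two lines; you are simply more explicit about separately unpacking the $\asymp$ and $\lesssim$ hypotheses and about the uniformity of the implied constants in $h_{i-1}$.
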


\begin{lemma} \label{escapeLemma}
For constant $\gamma > 0$, slowly varying function $l(h)$ and $h_i \leq \sqrt{\mu n}$
\begin{align}
    h_i^{\gamma} l \left( h_i \right) \lesssim n^{\frac{\gamma}{2}} l \left( \sqrt{n} \right) \qquad \text{ and } \qquad \frac{n^{-\gamma}}{h_i^{-\gamma}}l \left( \frac{\mu n}{h_i} \right) \lesssim n^{-\frac{\gamma}{2}} l \left( \sqrt{n} \right)\,.
\end{align}
\end{lemma}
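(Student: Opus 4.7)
The plan is to invoke Potter's inequality (the quantitative form of slow variation) and observe that each stated bound reduces to the principle that a regularly varying function of strictly positive index is controlled, on the interval $[1,\sqrt{\mu n}]$, by its value at the right endpoint. Recall that Potter's bound (see \cite{bingham_goldie_teugels_1987}) asserts that for any $\delta>0$ there exist constants $C_\delta>0$ and $x_\delta\geq 1$ such that
\[
\frac{l(y)}{l(x)}\leq C_\delta\max\{(y/x)^\delta,(y/x)^{-\delta}\}
\]
for all $x,y\geq x_\delta$.

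For the first inequality, I would fix $\delta\in(0,\gamma)$ and split the range of $h_i$ at $x_\delta$. On $[1,x_\delta)$, local boundedness of $l$ makes $h_i^\gamma l(h_i)$ bounded by an absolute constant, which is in turn $\lesssim n^{\gamma/2}l(\sqrt{n})$ because the latter diverges as $n\to\infty$. On $[x_\delta,\sqrt{\mu n}]$, Potter's bound applied with $y=h_i\leq x=\sqrt{\mu n}$ gives $l(h_i)\leq C_\delta(\sqrt{\mu n}/h_i)^\delta l(\sqrt{\mu n})$, so the factor $(\sqrt{\mu n}/h_i)^\delta$ combines with $h_i^\gamma$ to yield $h_i^{\gamma-\delta}(\sqrt{\mu n})^\delta\leq(\sqrt{\mu n})^\gamma$, since $\gamma-\delta>0$ and $h_i\leq\sqrt{\mu n}$. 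A final appeal to slow variation to replace $l(\sqrt{\mu n})$ by $l(\sqrt{n})$ completes this half.

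For the second inequality I would change variable to $g=\mu n/h_i$. Then $h_i\leq\sqrt{\mu n}$ corresponds to $g\geq\sqrt{\mu n}$, and the left-hand side becomes $\mu^\gamma g^{-\gamma}l(g)$. Potter with $y=g\geq x=\sqrt{\mu n}$ now produces $l(g)\leq C_\delta(g/\sqrt{\mu n})^\delta l(\sqrt{\mu n})$, and for $\delta\in(0,\gamma)$ the combination $g^{\delta-\gamma}$ is bounded above by $(\sqrt{\mu n})^{\delta-\gamma}$, so $g^{-\gamma}l(g)\lesssim(\sqrt{\mu n})^{-\gamma}l(\sqrt{\mu n})$. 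Re-inserting the prefactors and again using slow variation then yields the claimed $n^{-\gamma/2}l(\sqrt{n})$ bound.

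I do not expect any substantive obstacle. The one point worth flagging is that the weaker Karamata statement $l(\lambda x)/l(x)\to 1$ for fixed $\lambda$ is \emph{not} enough to obtain uniformity over an interval of growing length such as $[1,\sqrt{\mu n}]$; this is why Potter's inequality, whose polynomial slack $(y/x)^{\pm\delta}$ can be absorbed into the monomial $h_i^{\pm\gamma}$ without affecting the leading exponent, is the correct tool.
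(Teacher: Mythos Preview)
Your argument is correct. The paper, however, takes a slightly different route: instead of Potter's inequality (Theorem~1.5.6 in \cite{bingham_goldie_teugels_1987}), it invokes the monotone-equivalent theorem (Theorem~1.5.4 in \cite{bingham_goldie_teugels_1987}), which says that a regularly varying function of positive index is asymptotically equivalent to a non-decreasing function $\phi$, and one of negative index to a non-increasing function $\psi$. The first inequality then follows in one line from $h_i^{\gamma}l(h_i)\sim\phi(h_i)\leq\phi(\sqrt{\mu n})\sim(\mu n)^{\gamma/2}l(\sqrt{\mu n})$; the second from the same manoeuvre applied to $x\mapsto x^{-\gamma}l(x)$ after your substitution $g=\mu n/h_i$. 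What the paper's approach buys is brevity: the splitting at $x_\delta$ and the local-boundedness bookkeeping are absorbed into the cited theorem. What your approach buys is explicitness---you see exactly where the polynomial slack $(\cdot)^{\pm\delta}$ is absorbed by the margin $\gamma-\delta>0$---and your closing remark about why the bare Karamata definition is not enough is a worthwhile observation that the paper's one-line proof does not surface.
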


\begin{lemma} \label{powerDominatesSlowVary}
If $l(x)$ is a slowly-varying function, then, for any $a\in\mathbb{R}$ and $\epsilon > 0$,
\begin{align*}
    x^{a-\epsilon} l (x) \lesssim x^a\,.
\end{align*}
\end{lemma}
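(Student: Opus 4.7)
The plan is to invoke the standard Potter bound (a consequence of Karamata's representation theorem for slowly varying functions), which states that for any $\delta>0$ there exists $x_0 \geq 1$ such that $l(x) \leq x^{\delta}$ for all $x \geq x_0$. Equivalently, $l(x) = o(x^{\delta})$ as $x \to \infty$ for every $\delta > 0$, i.e.\ a slowly varying function grows (and decays) slower than every power of $x$.

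Given this, I would proceed as follows. Fix $a \in \mathbb{R}$ and $\epsilon > 0$. Choose any $\delta \in (0, \epsilon)$ (for concreteness $\delta = \epsilon/2$ works). By the Potter bound there exists $x_0$ such that $l(x) \leq x^{\delta}$ for all $x \geq x_0$. Hence, for all $x \geq \max\{x_0,1\}$,
\begin{align*}
    x^{a-\epsilon} l(x) \,\leq\, x^{a - \epsilon + \delta} \,=\, x^{a - \epsilon/2} \,\leq\, x^{a}.
\end{align*}

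Setting $h(x) = x^a$, we clearly have $h(x) \asymp x^a$, and the display above gives $x^{a-\epsilon} l(x) \leq h(x)$ for all sufficiently large $x$. This matches the definition of $\lesssim$ given in Section 2, completing the argument.

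There is no real obstacle here: the only substantive fact used is the Potter bound, which is a standard property of slowly varying functions (see e.g.\ Bingham, Goldie, Teugels, already cited in the paper). Everything else is arithmetic of exponents. The one small subtlety worth flagging is that one must pick $\delta$ strictly less than $\epsilon$, so that the leftover exponent $a - \epsilon + \delta$ is strictly less than $a$ and the resulting bound $x^{a-\epsilon/2}$ is dominated by $x^a$ for $x \geq 1$; picking $\delta = \epsilon$ would only give $x^{a-\epsilon} l(x) \leq x^a$ without room to spare, which is still enough, but choosing $\delta < \epsilon$ makes the comparison with $h(x) = x^a$ transparent.
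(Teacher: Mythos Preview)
Your proof is correct and essentially matches the paper's approach: the paper simply states that the lemma is a direct corollary of \cite[Theorem~1.5.4]{bingham_goldie_teugels_1987}, while you invoke the Potter bound (Theorem~1.5.6 in the same reference, already used elsewhere in the paper). Both routes amount to the standard fact that $l(x)=o(x^\delta)$ for every $\delta>0$, so the difference is cosmetic.
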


\begin{lemma} \label{convexLemma}
If $x_1,x_2,\ldots,x_m \geq 0$ and $v\geq 1$, then
\begin{align*}
    (x_1+x_2+\ldots+x_m)^v \lesssim x_1^v+x_2^v+\ldots+x_m^v\,.
\end{align*}
\end{lemma}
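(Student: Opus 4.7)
The plan is to reduce the inequality to a standard power-mean / convexity estimate and then match the result to the $\lesssim$ notation defined in Section~2.

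First I would observe that for each fixed $n$, we have the crude but sufficient bound
\[
(x_1+x_2+\cdots+x_m)^v \;\le\; \bigl(m\cdot\max_i x_i\bigr)^v \;=\; m^v\,\max_i x_i^v \;\le\; m^v\bigl(x_1^v+x_2^v+\cdots+x_m^v\bigr).
\]
Here the only place $v\ge 1$ is used is in pulling the exponent $v$ inside the $\max$, which is valid because $t\mapsto t^v$ is non-decreasing on $[0,\infty)$. One could alternatively invoke convexity of $t\mapsto t^v$ (Jensen's inequality) to obtain the sharper constant $m^{v-1}$, but the constant is immaterial for the $\lesssim$ conclusion.

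Next I would translate this pointwise bound into the notation of the paper. The quantities $x_1,\ldots,x_m$ are to be read as non-negative functions of $n$ (this is how the lemma is applied later, with $x_i$ coming from integrals involving $n$). Setting $h(n)=m^v\bigl(x_1(n)^v+\cdots+x_m(n)^v\bigr)$, the displayed inequality yields $(x_1(n)+\cdots+x_m(n))^v\le h(n)$ for all $n$, and trivially $h(n)\asymp x_1(n)^v+\cdots+x_m(n)^v$ since they differ by the fixed positive constant $m^v$. By the definition of $\lesssim$ in Section~2, this is exactly the desired conclusion.

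There is no real obstacle here; the only point requiring a line of care is verifying that $\lesssim$ is the appropriate relation — since the bound holds with a uniform multiplicative constant $m^v$ (independent of $n$), and $\asymp$ is preserved under multiplication by such a constant, the conclusion $\lesssim$ follows immediately from the definition.
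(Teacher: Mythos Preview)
Your proof is correct and matches the paper's own argument in spirit: the paper simply says the lemma ``follows immediately from the convexity of the function $g(x)=x^v$'', which is precisely the Jensen alternative you mention, yielding the constant $m^{v-1}$ in place of your $m^v$. Either way one obtains a uniform multiplicative constant depending only on $m$ and $v$, which is all that $\lesssim$ requires; your max-based bound is a perfectly acceptable (and arguably more elementary) variant of the same one-line estimate.
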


\subsection{Upper bounds for Theorem \ref{alphaNonInt}} \label{alphaNonIntProof}

To show that the intermediate terms are all asymptotically bounded above by the relevant asymptotics we subdivide the $I_m$ defined below (\ref{eq:cond}) into two further cases: $m > \alpha$ and $m \leq \alpha$. 

In the case where $m > \alpha$ (which only occurs when $k > \alpha$) we follow the methodology in \cite{Janssen_2019} and consider the subgraph of the $k$-clique consisting of an $m$-clique over the nodes whose weights are less than or equal to $\sqrt{\mu n}$ and a $(k-m)$-clique over the nodes whose weights are greater than $\sqrt{\mu n}$. Notably the two cliques in the subgraph are disjoint and independent. Hence,
\begin{align*}
     I_m &= \mathbb{P}\left( K_k, H_i \leq \sqrt{\mu n}, H_j > \sqrt{n}, 1 \leq i \leq m, m < j \leq k \right) \\
     &\leq \mathbb{P}\left( K_m, H_i \leq \sqrt{\mu n}, 1 \leq i \leq m \left) \mathbb{P}\right(K_{k-m}' , H_j' > \sqrt{\mu n}, 1 \leq j \leq k-m \right) \\
     &\asymp n^{\frac{m}{2}(1-\alpha)}l(\sqrt{n})^{m}n^{\frac{k-m}{2}(1-\alpha)} l(\sqrt{n})^{k-m} \\
     &= n^{\frac{k}{2}(1-\alpha)}l(\sqrt{n})^k\,,
\end{align*}
where, in the penultimate step, we used the results for the extreme cases established above. Hence, if $m>\alpha$ then $I_m \lesssim n^{\frac{k}{2}(1-\alpha)}l(\sqrt{n})^k$.

We now consider the case $m \leq \alpha$ and start by making two crucial observations. First, the nodes with weights conditioned to be less than $\sqrt{\mu n}$ are interchangeable, so without loss of generality we can say that $1 \leq H_m \leq H_{m-1} \leq \ldots \leq H_1 \leq \sqrt{\mu n}$. Second, since the remaining nodes all have weights greater that $\sqrt{\mu n}$ they are connected to each other with probability $1$ and connected to the nodes of weight less than or equal to $\sqrt{\mu n}$ (conditional on the weights of said nodes) independently of each other. Hence we can consider these nodes to be $(k-m)$ independent copies of each other. So, letting $v=k-m$ for ease of notation, we can write
\begin{align*}
    I_m 
    &= m! \mathbb{P}\left( K_k, H_m \leq H_{m-1} \leq \ldots \leq H_1 \leq \sqrt{\mu n}, H_j > \sqrt{\mu n}, m < j \leq k \right) \\
    &= m! \int_1^{\sqrt{\mu n}} \int_1^{h_1} \ldots \int_1^{h_{m-1}} \left(\int_{\sqrt{\mu n}}^{\infty} \ldots \int_{\sqrt{\mu n}}^{\infty} \prod_{1\leq i < j \leq m+v} p_{i,j}\; dF(h_{m+v}) \ldots dF(h_{m+1}) \right) \\
    & \qquad \qquad \qquad \qquad \qquad \qquad \qquad \qquad \qquad \qquad \qquad \ \  dF(h_m) \ldots dF(h_2)\; dF(h_1) \\
    &= m! \int_1^{\sqrt{\mu n}} \int_1^{h_1} \ldots \int_1^{h_{m-1}} \prod_{1\leq i < j \leq m} \frac{h_ih_j}{\mu n} \left(\int_{\sqrt{\mu n}}^{\infty} \prod_{i=1}^m p_{i,m+1}\; dF(h_{m+1}) \right)^v \\
    & \qquad \qquad \qquad \qquad \qquad \qquad \qquad \qquad \qquad \ \ dF(h_m) \ldots dF(h_2)\; dF(h_1) \\
    &\asymp n^{\frac{m}{2}(1-m)} \int_1^{\sqrt{\mu n}} h_1^{m-1} \int_1^{h_1} h_2^{m-1} \ldots \int_1^{h_{m-1}} h_m^{m-1} \left(\int_{\sqrt{\mu n}}^{\infty} \prod_{i=1}^m p_{i,m+1}\; dF(h_{m+1}) \right)^v \\
    & \qquad \qquad \qquad \qquad \qquad \qquad  \qquad \qquad \qquad \qquad \qquad \qquad dF(h_m) \ldots dF(h_2)\; dF(h_1) \\
    &= n^{\frac{m}{2}(1-m)} \overline{J}_{1,m} \left( \left( \int_{\sqrt{\mu n}}^{\infty} \prod_{i=1}^m p_{i,j}\; dF(h_{j}) \right)^v \right)\,.
\end{align*}
Hence, in order to show that $I_m$ is asymptotically less than or equal to the desired function, we only need to show that
\begin{equation*}
\overline{J}_{1,m} \left( \left( \int_{\sqrt{\mu n}}^{\infty} \prod_{i=1}^m p_{i,j}\; dF(h_{j}) \right)^v \right) \lesssim \begin{cases} n^{\frac{k}{2}(1-k) - \frac{m}{2}(1-m)} & \text{ when } k<\alpha\,, \\
    n^{\frac{k}{2}(1-\alpha)-\frac{m}{2}(1-m)} l \left( \sqrt{n} \right)^{k} & \text{ when } k>\alpha\,.
    \end{cases}
\end{equation*}
We will use the following lemma.

\begin{lemma}\label{middleExpansionLem}
\begin{multline}
    \int_{\sqrt{\mu n}}^{\infty} \prod_{i=1}^m p_{i,j}\; dF(h_{j})\\ \asymp C_0  n^{\frac{1}{2}(1-m-\alpha)} l(\sqrt{\mu n}) \prod_{i=1}^{m} h_i + \sum_{s=1}^{m} C_s n^{1-\alpha} h_s^{-(m-s+1-\alpha)} l \left( \frac{\mu n}{h_s} \right) \prod_{i=s+1}^{m} h_i\,, \label{middleExpansion}
\end{multline}
for some non-negative constants $C_0, C_1, \ldots, C_m$.
\end{lemma}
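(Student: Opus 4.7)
The plan is to split the integration interval $[\sqrt{\mu n}, \infty)$ at the thresholds where individual factors $p_{i,j}$ switch between their two possible values, then apply Lemma \ref{fixedAsymps} to each resulting piece. Recalling the ordering $h_m \leq h_{m-1} \leq \ldots \leq h_1 \leq \sqrt{\mu n}$ established in the derivation leading to the lemma, set $a_0 = \sqrt{\mu n}$ and $a_s = \mu n/h_s$ for $s = 1,\ldots,m$; the ordering of the $h_i$'s forces $a_0 \leq a_1 \leq \ldots \leq a_m$. For $h_j \in [a_{s-1}, a_s)$ with $s \in \{1,\ldots,m\}$ the condition $h_i h_j \geq \mu n$ holds exactly for $i < s$ (so $p_{i,j} = 1$) and fails for $i \geq s$ (so $p_{i,j} = h_i h_j/(\mu n)$); for $h_j \geq a_m$ every $p_{i,j} = 1$. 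Hence
\begin{align*}
\int_{\sqrt{\mu n}}^\infty \prod_{i=1}^m p_{i,j}\; dF(h_j)
= \sum_{s=1}^m \frac{\prod_{i=s}^m h_i}{(\mu n)^{m-s+1}} \int_{a_{s-1}}^{a_s} h_j^{m-s+1}\; dF(h_j) + \overline{F}(a_m).
\end{align*}

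Next I would evaluate each piece using Lemma \ref{fixedAsymps} with $\beta = m-s+1$. When $\beta - \alpha > -1$ (the \emph{heavy} case) the integral is $\asymp a_s^{\beta-\alpha+1} l(a_s)$, and after multiplying by $\prod_{i=s}^m h_i/(\mu n)^{m-s+1}$ a direct calculation shows the $s$-th summand is $\asymp n^{1-\alpha} h_s^{-(m-s+1-\alpha)} l(\mu n/h_s) \prod_{i=s+1}^m h_i$, exactly the shape of the $s$-th term on the right-hand side of \eqref{middleExpansion}. When $\beta - \alpha < -1$ (the \emph{light} case) the integral is dominated by the lower endpoint $a_{s-1}$: for $s \geq 2$ this yields (up to a positive constant) the same functional form as the $(s-1)$-th right-hand side term, and for $s = 1$ the lower endpoint is $\sqrt{\mu n}$ and the resulting contribution becomes $\asymp n^{(1-m-\alpha)/2} l(\sqrt{\mu n}) \prod_{i=1}^m h_i$, which matches the $C_0$ term. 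Finally $\overline{F}(a_m) = a_m^{1-\alpha} l(a_m) \asymp n^{1-\alpha} h_m^{\alpha-1} l(\mu n/h_m)$ is exactly the $s = m$ term.

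Summing these contributions would then give the claimed expansion: each right-hand side term $C_s$ collects the heavy contribution at index $s$ and the light contribution at index $s+1$, and because those two contributions share the same functional form in $n$ and in the $h_i$'s they combine into a single non-negative constant; some $C_s$ vanish when the associated heavy or light regime is vacuous (e.g.\ $C_0 = 0$ when $\alpha < m+1$, all heavy contributions vanish when $\alpha > m+1$). Since $\alpha$ is non-integer, $\beta - \alpha = -1$ never occurs for $s \in \{1,\ldots,m\}$, so Lemma \ref{fixedAsymps} applies at every index without ambiguity. The main obstacle is the bookkeeping: verifying that the heavy contribution at index $s$ and the light contribution at index $s+1$ really collapse to the identical shape $n^{1-\alpha} h_s^{\alpha-m+s-1} l(\mu n/h_s) \prod_{i=s+1}^m h_i$, and handling the boundary indices ($s = 1$ producing the $C_0$ term from $a_0 = \sqrt{\mu n}$, and $s = m$ receiving its contribution from $\overline{F}(a_m)$). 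These are straightforward algebraic checks from the definitions of the thresholds $a_s$.
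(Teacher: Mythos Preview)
Your proposal is correct and follows essentially the same route as the paper: the same partition of $[\sqrt{\mu n},\infty)$ at the thresholds $a_s=\mu n/h_s$, the same application of Lemma~\ref{fixedAsymps} to each piece, and the same identification of the tail $\overline{F}(a_m)$ with the $s=m$ term. Your bookkeeping (heavy at index $s$ and light at index $s+1$ collapsing to the same functional shape) is precisely the ``manipulation of indices'' the paper invokes without spelling out; the paper additionally records that, because $m<\alpha$ in this context, only $s=1$ can ever be heavy, which is consistent with your remark that all heavy contributions vanish once $\alpha>m+1$.
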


\begin{proof}
Noting that if $h_j > \frac{\mu n}{h_i}$ then $p_{i,j} = 1$, we see (using the convention that $h_0 = \sqrt{\mu n}$),
\begin{align} \label{middleExpansionIntegrals}
    \int_{\sqrt{\mu n}}^{\infty} \prod_{i=1}^m p_{i,j}\; dF(h_{j}) =& \sum_{s=1}^{m} \int_{\frac{\mu n}{h_{s-1}}}^{\frac{\mu n}{h_s}} \prod_{i=s}^{m} \frac{h_ih_j}{\mu n} \; dF(h_j) + \int_{\frac{\mu n}{h_m}}^\infty \; dF(h_j) \nonumber \\
    \asymp& \sum_{s=1}^{m} \mu_{s} n^{-(m-s+1)} \prod_{i=s}^{m} h_i \int_{\frac{\mu n}{h_{s-1}}}^{\frac{\mu n}{h_s}} h_j^{m-s+1} \; dF(h_j) + \overline{F}\left( \frac{\mu n}{h_m} \right)\,,
\end{align}
where $\mu_s = \mu^{-(m-s+1)}$. We proceed to show that each of the terms here is asymptotic to one of the terms in (\ref{middleExpansion}). Referring to Lemma \ref{fixedAsymps}, the asymptotics of each integral term depend on whether $m-s+1-\alpha$ is greater than or less than $-1$. The case when $m-s+1-\alpha = -1$ does not arise since $m$ and $s$ are both integers, and $\alpha$ is not.

We note that since $m<\alpha$ then $(m-\alpha)-s+1 < \alpha - 1$ for all $s \geq 2$, with $s=1$ being a special case. So, using Lemma \ref{fixedAsymps},
\begin{align*}
    \mu_1 n^{-m} \prod_{i=1}^{m} h_i \int_{\frac{\mu n}{h_{0}}}^{\frac{\mu n}{h_1}} h_j^{m} \; dF(h_j) &\asymp \begin{cases} n^{1-\alpha} h_1^{-(m-\alpha)} l \left( \frac{\mu n}{h_1} \right) \prod_{i=2}^{m} h_i & \text{when } \alpha - 1 < m < \alpha\,,\\
     n^{\frac{1}{2}(1-m-\alpha)} l(\sqrt{\mu n}) \prod_{i=1}^{m} h_i   & \text{when } m < \alpha - 1\,, \\
    \end{cases}
    \end{align*}
and
\begin{align*}    
    \mu_s n^{-(m-s+1)} \prod_{i=s}^{m} h_i \int_{\frac{\mu n}{h_{s-1}}}^{\frac{\mu n}{h_s}} h_j^{m-s+1} \; dF(h_j) &\asymp  
     n^{1-\alpha} h_{s-1}^{-(m-s+2-\alpha)} l \left( \frac{\mu n}{h_{s-1}} \right) \prod_{i=s}^{m} h_i\,, \\
    \overline{F}\left( \frac{\mu n}{h_m} \right) &= n^{1-\alpha}h_m^{-(1-\alpha)} l\left( \frac{\mu n}{h_m} \right)\,.
\end{align*}
The result follows after some manipulation of indices.
\end{proof}

We now make use of Lemma \ref{convexLemma} and the linearity of $\overline{J}_{1,m}$ to see that
\begin{align}
    &\overline{J}_{1,m} \left( \left( \int_{\sqrt{\mu n}}^{\infty} \prod_{i=1}^m p_{i,j}\; dF(h_{j}) \right)^v \right) \nonumber \\ 
   & \lesssim \overline{J}_{1,m} \left( C_0^v  n^{\frac{v}{2}(1-m-\alpha)} l(\sqrt{\mu n})^v \prod_{i=1}^{m} h_i^v + \sum_{s=1}^{m} C_s^v n^{v(1-\alpha)} h_s^{-v(m-s+1-\alpha)} l \left( \frac{\mu n}{h_s} \right)^v \prod_{i=s+1}^{m} h_i^v \right) \nonumber \\
   & \lesssim  C_0^v  n^{\frac{v}{2}(1-m-\alpha)} l(\sqrt{\mu n})^v \overline{J}_{1,m} \left(\prod_{i=1}^{m} h_i^v \right)\nonumber\\
    &\qquad + \sum_{s=1}^{m} C_s^v n^{v(1-\alpha)} \overline{J}_{1,m} \left( h_s^{-v(m-s+1-\alpha)} l \left( \frac{\mu n}{h_s} \right)^v \prod_{i=s+1}^{m} h_i^v \right)\,. \label{fullExpandedSum}
\end{align}

It remains to show that, depending on $k$, each of the terms of (\ref{fullExpandedSum}) is asymptotically bounded above by the desired function. We do this via four lemmas.

\begin{lemma} \label{JLemo1}
For integers $m,v\geq1$ such that $m+v = k > \alpha$ and an integer $r$ such that $0 \leq r \leq m-1$,
\begin{align*}
    \overline{J}_{m-r,m} \left(\prod_{i=1}^{m} h_i^v \right) \lesssim n^{\frac{r+1}{2}(m+v-\alpha)} l(\sqrt{n})^{r+1} \prod_{i=1}^{m-r-1}h_i^v\,. 
\end{align*}
\end{lemma}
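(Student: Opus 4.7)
My plan is to prove the lemma by induction on $r$. The driving idea is that each application of a single integration $J_{i,m}$ contributes exactly one factor of $n^{(k-\alpha)/2} l(\sqrt{n})$, which accounts for the $(r+1)$ such factors in the target bound. This contribution arises in two steps: Corollary~\ref{JCor}, applied with $\beta = v$ (noting $\beta + m - \alpha = v + m - \alpha = k - \alpha > 0$ since $k > \alpha$), converts a factor $h_i^v$ into $h_{i-1}^{k-\alpha} l(h_{i-1})$, and then Lemma~\ref{escapeLemma} with $\gamma = k-\alpha > 0$ absorbs this into $n^{(k-\alpha)/2} l(\sqrt{n})$ since $h_{i-1} \leq \sqrt{\mu n}$ throughout the region of integration.

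For the base case $r = 0$, I would write the integrand as $B \cdot h_m^v$ with $B = \prod_{i=1}^{m-1} h_i^v$ independent of $h_m$ and apply Corollary~\ref{JCor} followed by Lemma~\ref{escapeLemma} as above, obtaining $J_{m,m}\bigl(\prod_{i=1}^m h_i^v\bigr) \lesssim n^{(k-\alpha)/2} l(\sqrt{n}) \prod_{i=1}^{m-1} h_i^v$, which is precisely the claimed bound at $r=0$. For the inductive step, I would decompose $\overline{J}_{m-r-1,m} = J_{m-r-1,m} \circ \overline{J}_{m-r,m}$ and feed the inductive hypothesis into $J_{m-r-1,m}$. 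The hypothesis reads $\overline{J}_{m-r,m}(\prod h_i^v) \lesssim B' \cdot h_{m-r-1}^v$ where $B' = n^{\frac{r+1}{2}(k-\alpha)} l(\sqrt{n})^{r+1} \prod_{i=1}^{m-r-2} h_i^v$ does not depend on $h_{m-r-1}$; re-applying Corollary~\ref{JCor} and Lemma~\ref{escapeLemma} then produces the extra factor $n^{(k-\alpha)/2} l(\sqrt{n})$ and consumes the last $h_{m-r-1}^v$, yielding the bound at $r+1$.

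The main obstacle is really just the bookkeeping of exponents together with the edge case $r + 1 = m - 1$, in which the outermost integration is $J_{1,m}$ and the ``remaining'' variable $h_{m-r-2}$ becomes $h_0 = \sqrt{\mu n}$ by the convention adopted in the definition of $J_{i,m}$. In that edge case Corollary~\ref{JCor} produces $(\sqrt{\mu n})^{k-\alpha} l(\sqrt{\mu n}) \asymp n^{(k-\alpha)/2} l(\sqrt{n})$ directly, so Lemma~\ref{escapeLemma} is not even needed. That the $\lesssim$ relation is preserved under each functional $J_{i,m}$ is exactly the content of Corollary~\ref{JCor}, so no further justification is required beyond checking that the inductive hypothesis at each step has the precise form $B \cdot h_{m-r-1}^v$ demanded by the corollary.
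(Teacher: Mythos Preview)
Your proposal is correct and follows essentially the same approach as the paper: induction on $r$, using Corollary~\ref{JCor} with $\beta = v$ (so $\beta + m - \alpha = k - \alpha > 0$) followed by Lemma~\ref{escapeLemma} at each step to convert $h_{m-r-1}^{k-\alpha} l(h_{m-r-1})$ into $n^{(k-\alpha)/2} l(\sqrt{n})$. Your explicit treatment of the edge case $r = m-1$ (where $h_0 = \sqrt{\mu n}$ by convention) is actually more careful than the paper's presentation, which leaves that case implicit.
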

\begin{proof}
We proceed by induction on $r$. Note that
\begin{equation*}
    \overline{J}_{m,m} \left(\prod_{i=1}^{m} h_i^v \right) = J_{m,m} \left(\prod_{i=1}^{m} h_i^v \right)
    \asymp h_{m-1}^{m+v-\alpha} l(h_{m-1}) \prod_{i=1}^{m-1} h_i^v \lesssim n^{\frac{1}{2}(m+v-\alpha)} l \left( \sqrt{n} \right) \prod_{i=1}^{m-1} h_i^v \,, 
\end{equation*}
where we used Corollary \ref{JCor}, Lemma \ref{escapeLemma} and the fact that $m+v = k > \alpha$. Similarly, we have
\begin{align*}
    \overline{J}_{m-r,m} \left(\prod_{i=1}^{m} h_i^v \right) &\lesssim J_{m-r,m} \left(n^{\frac{r}{2}(m+v-\alpha)} l(\sqrt{n})^{r} \prod_{i=1}^{m-r}h_i^v \right) \\
    &\asymp n^{\frac{r}{2}(m+v-\alpha)} l(\sqrt{n})^{r} h_{m-r-1}^{m+v-\alpha} l \left( h_{m-r-1} \right) \prod_{i=1}^{m-r-1}h_i^v \\
    &\lesssim n^{\frac{r+1}{2}(m+v-\alpha)} l(\sqrt{n})^{r+1} \prod_{i=1}^{m-r-1}h_i^v\,,
\end{align*}
which completes the proof.
\end{proof}

\begin{lemma} \label{JLemo2}
For integers $m,v\geq1$ such that $m+v = k > \alpha$ and an integer $s$ such that $1 \leq s \leq m$, let $\sigma$ be such that $m+v-\alpha > \sigma > 0$. Then, for an integer $r$ such that $0 \leq r \leq m-1$,
\begin{align*}
    &\overline{J}_{m-r,m} \left( h_s^{-v(m-s+1-\alpha)} l \left( \frac{\mu n}{h_s} \right)^v \prod_{i=s+1}^{m} h_i^v \right)  \\ &\lesssim \begin{cases} n^{\frac{r+1}{2}(m+v-\alpha)} h_s^{-v(m-s+1-\alpha)} l \left( \frac{\mu n}{h_s} \right)^v l \left( \sqrt{n} \right)^{r+1} \prod_{i=s+1}^{m-r-1} h_i^v & \text{when } s < m-r \leq m\,, \\
    n^{\frac{r}{2}(m+v-\alpha)+\frac{1}{2}(m-\alpha)(1-v)+\frac{\sigma}{2}} l \left( \sqrt{n} \right)^{v+r+1} h_{m-r-1}^{v(m-r-1)-\sigma} & \text{when } 1\leq m-r \leq s\,.
    \end{cases}
\end{align*}
\end{lemma}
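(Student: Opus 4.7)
The plan is to proceed by induction on $r$, handling the two regimes of the conclusion separately and linking them at the transition point $r = m - s$.

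In the first regime ($s < m - r$), the variable $h_s$ is outside all the integrations that comprise $\overline{J}_{m-r, m}$, so the factor $h_s^{-v(m-s+1-\alpha)} l(\mu n/h_s)^v$ commutes past every $J_{i,m}$ for $i \in \{m-r, \ldots, m\}$. The proof then mirrors that of Lemma \ref{JLemo1} verbatim: each application of $J_{i, m}$ to the factor $h_i^v$ produces an integrand of the form $h_i^{m-1+v}$, and Corollary \ref{JCor} (applicable since $m+v-\alpha > 0$) yields a factor $h_{i-1}^{m+v-\alpha} l(h_{i-1})$, which is bounded by $n^{\frac{1}{2}(m+v-\alpha)} l(\sqrt{n})$ via Lemma \ref{escapeLemma}. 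After $r+1$ such steps the product $\prod_{i=s+1}^{m} h_i^v$ reduces to $\prod_{i=s+1}^{m-r-1} h_i^v$, giving the first-case bound.

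The base case of the second regime occurs at $r = m-s$. Starting from the first-regime bound at $r-1 = m-s-1$ (where the surviving product is empty), apply $J_{s, m}$: one is left to estimate
\begin{equation*}
\int_1^{h_{s-1}} h_s^{m-1-v(m-s+1-\alpha)} l(\mu n / h_s)^v\, dF(h_s).
\end{equation*}
The key manipulation is to bound $l(\mu n/h_s)^v \lesssim n^{\sigma/2} h_s^{-\sigma} l(\sqrt{n})^v$, which follows from Lemma \ref{escapeLemma} applied with $\gamma = \sigma/v$ and then raised to the $v$-th power. Lemma \ref{fixedAsymps} is then invoked with $\beta = m-1-v(m-s+1-\alpha)-\sigma$. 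Provided $v \geq 2$ or $s \geq 2$, the quantity $\beta-\alpha+1 = (m-\alpha)(1-v) + v(s-1) - \sigma$ is positive for $\sigma$ small, giving the first case and producing a factor $h_{s-1}^{(m-\alpha)(1-v) + v(s-1) - \sigma} l(h_{s-1})$. Splitting this as $h_{s-1}^{v(s-1)-\sigma} \cdot h_{s-1}^{(m-\alpha)(1-v)} l(h_{s-1})$ and applying Lemma \ref{escapeLemma} again to the second factor (with $\gamma = (m-\alpha)(1-v) \geq 0$; the degenerate case $\gamma=0$ is absorbed using the slow variation of $l$ and Lemma \ref{powerDominatesSlowVary} at the cost of an arbitrarily small enlargement of $\sigma$) produces exactly the claimed form at $r = m-s$. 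In the edge case $v=1, s=1$ we instead land in the second case of Lemma \ref{fixedAsymps}; this is consistent because then $m-r-1 = 0$, so $h_{m-r-1} = h_0 = \sqrt{\mu n}$ and the factor $h_0^{-\sigma}$ in the claim cancels the $n^{\sigma/2}$ picked up from the Lemma \ref{escapeLemma} bound.

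For the inductive step within the second regime ($r > m - s$), apply $J_{m-r, m}$ to the bound at $r-1$: the relevant integrand reduces to $h_{m-r}^{m-1+v(m-r)-\sigma}$, and the hypothesis $m + v - \alpha > \sigma$ ensures $m + v(m-r) - \alpha - \sigma \geq m + v - \alpha - \sigma > 0$, so the first case of Lemma \ref{fixedAsymps} applies. The resulting exponent splits as $v(m-r-1) - \sigma + (m+v-\alpha)$, and Lemma \ref{escapeLemma} with $\gamma = m+v-\alpha$ absorbs the surplus $h_{m-r-1}^{m+v-\alpha} l(h_{m-r-1})$ into $n^{\frac{1}{2}(m+v-\alpha)} l(\sqrt{n})$. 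This advances the induction, incrementing the exponent of $n^{\frac{1}{2}(m+v-\alpha)}$ by $1$ and the exponent of $l(\sqrt{n})$ by $1$ per step, as required.

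The main obstacle is the transition step at $r = m-s$: simultaneously handling the unbounded slowly varying factor $l(\mu n/h_s)^v$ via Lemma \ref{escapeLemma} (which forces the introduction of the perturbation parameter $\sigma$) and verifying that the exponent from Lemma \ref{fixedAsymps} decomposes into precisely the piece $h_{s-1}^{v(s-1)-\sigma}$ that subsequently feeds the induction, plus a residual $h_{s-1}^{(m-\alpha)(1-v)} l(h_{s-1})$ that is converted via Lemma \ref{escapeLemma} into the additional $n^{\frac{1}{2}(m-\alpha)(1-v)}$ factor appearing in the claim.
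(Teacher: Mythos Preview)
Your inductive framework is the same as the paper's, and the first regime together with the inductive step inside the second regime are handled correctly. The difficulty is entirely at the transition step $r=m-s$, and there your execution has two related gaps.

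First, you set $\gamma=\sigma/v$ exactly, whereas the paper chooses $v\gamma$ strictly less than $\min\{v(s-1),\sigma\}$. After integration the paper obtains the exponent $(m-\alpha)(1-v)+v(s-1)-v\gamma$ on $h_{s-1}$ and then splits it as $\bigl(v(s-1)-\sigma\bigr)+\bigl((m-\alpha)(1-v)+\sigma-v\gamma\bigr)$, applying Lemma~\ref{escapeLemma} to the second piece; this needs $(m-\alpha)(1-v)+\sigma-v\gamma>0$, which holds precisely because $v\gamma<\sigma$. With your choice $v\gamma=\sigma$ that exponent collapses to $(m-\alpha)(1-v)$, which is zero when $v=1$. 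Your suggested fix (``enlarge $\sigma$'') changes the parameter given in the hypothesis and also loses an $l(\sqrt n)$ factor relative to the claim.

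Second, for $s=1$ your positivity assertion ``$(m-\alpha)(1-v)+v(s-1)-\sigma>0$ for $\sigma$ small'' is not enough: $\sigma$ is arbitrary in $(0,m+v-\alpha)$, and when $s=1$, $m=\lfloor\alpha\rfloor$ one has $(m-\alpha)(1-v)<m+v-\alpha$, so the inequality fails for $\sigma$ near the top of its range. The paper does not try to push $s=1$ through the generic argument; instead it first applies $J_{2,m}$ (producing a factor $l(h_1)$ via Corollary~\ref{JCor}) and only then invokes Potter's theorem to bound $l(\mu n/h_1)^v\,l(h_1)\lesssim n^{\sigma/2}h_1^{-\sigma}l(\sqrt n)^{v+1}$ jointly. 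The remaining $J_{1,m}$ integral then falls into the first case of Lemma~\ref{fixedAsymps} for every admissible $\sigma$. Your separate edge-case computation for $v=s=1$ is also off by a factor: the $n^{\sigma/2}$ introduced by Lemma~\ref{escapeLemma} survives in your derived bound, while in the claim it is cancelled by $h_0^{-\sigma}$, so what you obtain is $n^{\sigma/2}/l(\sqrt n)$ too large rather than ``consistent''.
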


\begin{proof}

Again we proceed by induction on $r$. If $s \neq m$ we use the same methodology as in Lemma \ref{JLemo1} to see
\begin{align*}
    \overline{J}_{m,m} \left( h_s^{-v(m-s+1-\alpha)} l \left( \frac{\mu n}{h_s} \right)^v \prod_{i=s+1}^{m} h_i^v \right) &\lesssim n^{\frac{1}{2}(m+v-\alpha)} l\left( \sqrt{n} \right) h_s^{-v(m-s+1-\alpha)} l \left( \frac{\mu n}{h_s} \right)^v \prod_{i=s+1}^{m-1} h_i^v\,,
\end{align*}

Similarly, for $r$ such that $s < m-r \leq m-1$ we can write
\begin{align*}
     &\overline{J}_{m-r,m} \left( h_s^{-v(m-s+1-\alpha)} l \left( \frac{\mu n}{h_s} \right)^v \prod_{i=s+1}^{m} h_i^v \right) \\ 
     &\lesssim J_{m-r,m} \left( n^{\frac{r}{2}(m+v-\alpha)} h_s^{-v(m-s+1-\alpha)} l \left( \frac{\mu n}{h_s} \right)^v l \left( \sqrt{n} \right)^{r} \prod_{i=s+1}^{m-r} h_i^v \right) \\
     &\lesssim n^{\frac{r+1}{2}(m+v-\alpha)} h_s^{-v(m-s+1-\alpha)} l \left( \frac{\mu n}{h_s} \right)^v  l \left( \sqrt{n} \right)^{r+1} \prod_{i=s+1}^{m-r-1} h_i^v\,.
\end{align*}
Hence the lemma holds for all $r$ such that $s < m-r \leq m$. 

We now note that if $s > 1$ then $v(s-1)>0$, so we can choose a constant $\gamma$ such that
\begin{equation}
    \min\left\{v(s-1),\sigma \right\} > v\gamma > 0\,. \label{gammaDef}
\end{equation}
We will address the case where $s=1$ later in this proof.

Note that
\begin{align*}
    -v(m-s+1-\alpha+\gamma)+m-\alpha &= (m-\alpha)(1-v) + v(s-1) - v\gamma >0\,. 
\end{align*}
Since also $(m-\alpha)(1-v)+\sigma-v\gamma > 0$ and $\gamma > 0$, we may use Corollary \ref{JCor}, both parts of Lemma \ref{escapeLemma} and the already proven parts of this lemma to see that
\begin{align*}
    &\overline{J}_{s,m} \left( h_s^{-v(m-s+1-\alpha)} l \left( \frac{\mu n}{h_s} \right)^v \prod_{i=s+1}^{m} h_i^v \right) \\ & \lesssim J_{s,m} \left( n^{\frac{m-s}{2}(m+v-\alpha)} h_s^{-v(m-s+1-\alpha)} l \left( \frac{\mu n}{h_s} \right)^v  l \left( \sqrt{n} \right)^{m-s} \right) \\
   & \lesssim n^{\frac{v\gamma}{2}+\frac{m-s}{2}(m+v-\alpha)} h_{s-1}^{(m-\alpha)(1-v)+v(s-1)-v\gamma} l \left( h_{s-1} \right) l \left( \sqrt{n} \right)^{m+v-s} \\
   & \lesssim n^{\frac{m-s}{2}(m+v-\alpha)+\frac{1}{2}(m-\alpha)(1-v) +\frac{\sigma}{2}} h_{s-1}^{v(s-1)-\sigma} l \left( \sqrt{n} \right)^{m+v-s+1}\,.
\end{align*}
Note that the first line here (and hence all following lines) is consistent with the case where $s=m$.

Observe that for $r$ such that $m-r > 1$ we have
\begin{align*}
    v(m-r-1)-\sigma + m - \alpha &= ((m+v-\alpha)-\sigma)+v(m-r-2) > 0\,.
\end{align*}
So, for $r$ such that $1\leq m-r<s$ we see that
\begin{align*}
    &\overline{J}_{m-r,m} \left( h_s^{-v(m-s+1-\alpha)} l \left( \frac{\mu n}{h_s} \right)^v \prod_{i=s+1}^{m} h_i^v \right) \\ & \lesssim J_{m-r,m} \left( n^{\frac{r-1}{2}(m+v-\alpha)+\frac{1}{2}(m-\alpha)(1-v)+\frac{\sigma}{2}} l \left( \sqrt{n} \right)^{v+r} h_{m-r}^{v(m-r)-\sigma} \right) \\
   & \lesssim n^{\frac{r-1}{2}(m+v-\alpha)+\frac{1}{2}(m-\alpha)(1-v)+\frac{\sigma}{2}} l \left( \sqrt{n} \right)^{v+r} h_{m-r-1}^{m+v-\alpha +v(m-r-1)-\sigma} l \left( h_{m-r-1} \right) \\
   & \lesssim n^{\frac{r}{2}(m+v-\alpha)+\frac{1}{2}(m-\alpha)(1-v)+\frac{\sigma}{2}} l \left( \sqrt{n} \right)^{v+r+1} h_{m-r-1}^{v(m-r-1)-\sigma}\,,
\end{align*}
where in the last step we again used Lemma \ref{escapeLemma}. Hence, the lemma holds for $1\leq m-r\leq s$ in the case $s>1$.

In the case where $s=1$, noting that if $l(x)$ is slowly-varying then so is $l(x)^v$, we observe from Potter's Theorem \cite[Theorem 1.5.6]{bingham_goldie_teugels_1987} that there exist constants $D_1 ,D_2 >1$ dependent on $\sigma$ such that
\begin{equation*}
    \frac{l \left( \mu n/h_1 \right)^v}{l \left(\sqrt{\mu n} \right)^v} \leq D_1 \left( \frac{\sqrt{\mu n}}{h_1} \right)^{\frac{\sigma}{2}}\qquad\mbox{ and }\qquad
    \frac{l \left( h_1 \right)}{l \left(\sqrt{\mu n} \right)} \leq D_2 \left( \frac{\sqrt{\mu n}}{h_1} \right)^{\frac{\sigma}{2}}\,.
\end{equation*}
Hence, since $l(\sqrt{\mu n})>0$ for all $n\geq1$,
\begin{align}
    l \left( \frac{\mu n}{h_1}\right)^v l \left( h_1 \right) \leq D_1 D_2 (\mu n)^{\frac{\sigma}{2}} h_1^{-\sigma} l \left( \sqrt{\mu n} \right)^{v+1} \asymp n^{\frac{\sigma}{2}} h_1^{-\sigma} l \left( \sqrt{n} \right)^{v+1}\,.
\end{align}
So we see that
\begin{align*}
    &\overline{J}_{s,m} \left( h_s^{-v(m-s+1-\alpha)} l \left( \frac{\mu n}{h_s} \right)^v \prod_{i=s+1}^{m} h_i^v \right) \\ & \lesssim J_{1,m} \left( J_{2,m} \left( n^{\frac{m-2}{2}(m+v-\alpha)} h_1^{-(m-\alpha)} l \left( \frac{\mu n}{h_1} \right)^v l \left( \sqrt{n} \right)^{m-2} h_2^v \right) \right) \\
   & \lesssim J_{1,m} \left( n^{\frac{m-2}{2}(m+v-\alpha)} h_1^{-(m-\alpha)} l \left( \frac{\mu n}{h_1} \right)^v l \left( \sqrt{n} \right)^{m-2} h_1^{m+v-\alpha} l \left( h_1 \right) \right) \\
   & \lesssim J_{1,m} \left( n^{\frac{m-2}{2}(m+v-\alpha)+\frac{\sigma}{2}} l \left( \sqrt{n} \right)^{m+v-1} h_1^{v-\sigma} \right) \\
   & =  n^{\frac{m-1}{2}(m+v-\alpha)+\frac{\sigma}{2}} l \left( \sqrt{n} \right)^{m+v} h_0^{-\sigma}\,,
\end{align*}
and our lemma holds in this case also.
\end{proof}

\begin{lemma} \label{JLemo3}
For integers $m,v\geq1$ such that $m+v = k < \alpha$ and an integer $r$ such that $0 \leq r \leq m-1$,
\begin{align*}
    \overline{J}_{m-r,m} \left(\prod_{i=1}^{m} h_i^v \right) \lesssim \prod_{i=1}^{m-r-1} h_i^v\,.
\end{align*}
\end{lemma}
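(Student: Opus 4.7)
The plan is to mirror the induction used for Lemma \ref{JLemo1}, but to invoke the opposite case of Corollary \ref{JCor}: because $k = m+v < \alpha$ we will have $\beta + m - \alpha < 0$ at every step, so no new powers of $h$ or factors of $l(\sqrt{n})$ are introduced.

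I would induct on $r$. For the base case $r=0$, I write
\begin{equation*}
J_{m,m}\!\left(\prod_{i=1}^{m} h_i^v\right) = \int_1^{h_{m-1}} h_m^{m-1}\,\Bigl(\prod_{i=1}^{m} h_i^v\Bigr)\,dF(h_m),
\end{equation*}
and regard the integrand, as a function of $h_m$, in the form $B(h_1,\dots,h_{m-1})\,h_m^{\beta}$ with $B=\prod_{i=1}^{m-1}h_i^v$ and $\beta = v$. Since $\beta + m - \alpha = k - \alpha < 0$, Corollary \ref{JCor} yields
\begin{equation*}
J_{m,m}\!\left(\prod_{i=1}^{m} h_i^v\right) \lesssim \prod_{i=1}^{m-1} h_i^v,
\end{equation*}
establishing the $r=0$ case.

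For the inductive step, assume the result holds for $r-1$, so that $\overline{J}_{m-r+1,m}(\prod_{i=1}^m h_i^v) \lesssim \prod_{i=1}^{m-r} h_i^v$. Using $\overline{J}_{m-r,m} = J_{m-r,m}\circ \overline{J}_{m-r+1,m}$ together with the monotonicity and linearity of $J_{m-r,m}$, I obtain
\begin{equation*}
\overline{J}_{m-r,m}\!\left(\prod_{i=1}^m h_i^v\right) \lesssim J_{m-r,m}\!\left(\prod_{i=1}^{m-r} h_i^v\right).
\end{equation*}
The integrand on the right is $B'(h_1,\dots,h_{m-r-1})\,h_{m-r}^{\beta}$ with $B' = \prod_{i=1}^{m-r-1}h_i^v$ and again $\beta = v$, so once more $\beta + m - \alpha = k - \alpha < 0$, and Corollary \ref{JCor} gives
\begin{equation*}
J_{m-r,m}\!\left(\prod_{i=1}^{m-r} h_i^v\right) \lesssim \prod_{i=1}^{m-r-1} h_i^v,
\end{equation*}
completing the induction.

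There is no serious obstacle here: the main point is simply that the exponent on the variable of integration remains $v$ at every stage (because the integration against $h_i^{m-1}\,dF(h_i)$ in the sub-case $\beta+m-\alpha<0$ contributes only a bounded constant, removing the $h_{m-r}^v$ factor and leaving the inner factors untouched). The only thing to verify carefully is that $k-\alpha < 0$ guarantees $\beta + m - \alpha < 0$ uniformly in $r$, so that the ``bounded'' branch of Corollary \ref{JCor} always applies; this is immediate from the hypothesis $m+v < \alpha$.
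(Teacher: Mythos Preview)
Your proposal is correct and follows essentially the same approach as the paper's own proof: both argue by induction on $r$, applying Corollary~\ref{JCor} at each step with $\beta = v$ and using $m+v-\alpha = k-\alpha < 0$ to land in the ``bounded'' branch. The paper actually records the sharper $\asymp$ at each step rather than $\lesssim$, but since the statement only asks for $\lesssim$ your version is entirely adequate.
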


\begin{proof}
Again, we proceed by induction on $r$:
\begin{equation*}
    \overline{J}_{m,m} \left(\prod_{i=1}^{m} h_i^v \right) = J_{m,m} \left(\prod_{i=1}^{m} h_i^v \right)
    \asymp \prod_{i=1}^{m-1} h_i^v\,,
\end{equation*}
where we used Corollary \ref{JCor} and the fact that $m+v = k < \alpha$. Similarly, we see that for $1\leq r \leq m-1$
\begin{equation*}
    \overline{J}_{m-r,m} \left(\prod_{i=1}^{m} h_i^v \right) \asymp J_{m-r,m} \left(\prod_{i=1}^{m-r} h_i^v \right) 
    \asymp \prod_{i=1}^{m-r-1} h_i^v\,,
\end{equation*}
as required.
\end{proof}

\begin{lemma} \label{JLemo4}
For integers $m,v\geq1$ such that $m+v = k < \alpha$ and an integer $s$ such that $1 \leq s \leq m$, let $\sigma_1,\sigma_2,\ldots, \sigma_s$ satisfy $\sigma_s > \sigma_{s-1} > \ldots > \sigma_2 > \sigma_1 > 0$ and
\begin{align*}
    2v(\alpha-m-v)+v(v-1) > \sigma_s &> \max \left\{ 0, (s-v)(m-\alpha)+v(s-1)\right\}\,.
\end{align*}
Then, for an integer $r$ such that $0 \leq r \leq m-1$,
\begin{align*}
    &\overline{J}_{m-r,m} \left( h_s^{-v(m-s+1-\alpha)} l \left( \frac{\mu n}{h_s} \right)^v \prod_{i=s+1}^{m} h_i^v \right)  \\ &\lesssim \begin{cases} 
     h_s^{-v(m-s+1-\alpha)} l \left( \frac{\mu n}{h_s} \right)^v \prod_{i=s+1}^{m-r-1} h_i^v & \text{when } s< m-r \leq m\,,  \\
    n^{\frac{\sigma_{q+2}}{2}} l \left( \sqrt{n} \right)^{v+q+1} h_{m-r-1}^{((q+1)-v)(m-\alpha)+v(s-1) - \sigma_{q+2}} & \text{when } s-t'+1 < m-r \leq s\,, \\
    n^{\frac{\sigma_{t'}}{2}} l \left( \sqrt{n} \right)^{v+t'-1} & \text{when } 1 \leq m-r \leq s-t'+1\,,
    \end{cases}
\end{align*}
where $q = s - m + r$, $\eta_t = (t-v)(m-\alpha)+v(s-1)-\sigma_t$ and $t' = \min\left\{ t\ |\ 1 \leq t \leq s, \eta_t < 0 \right\}$.
\end{lemma}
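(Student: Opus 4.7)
The plan is to mirror the proof of Lemma \ref{JLemo2}, proceeding by induction on $r$ and peeling off one $J_{m-r,m}$ at a time from the outside. The three regimes in the statement correspond to three distinct behaviours of Corollary \ref{JCor} as $m-r$ decreases: in Case 1 the factor $l(\mu n/h_s)^v$ has not yet been reached by the integration variable; in Case 2 we are integrating past $h_s$ and Potter's theorem must be used to turn the slowly varying factor into a polynomial one; in Case 3 the exponent has dropped enough to put us in the lower branch of Corollary \ref{JCor}. The crucial qualitative change from Lemma \ref{JLemo2} is that now $v+m-\alpha = k-\alpha<0$, so any integration of an integrand of the form $B\cdot h_{m-r}^v$ with $B$ independent of $h_{m-r}$ falls in the lower branch of Corollary \ref{JCor} and merely strips the $h_{m-r}^v$ factor, producing neither a new $h_{m-r-1}$ nor a new $l(h_{m-r-1})$.

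Case 1 ($s < m-r \le m$) is then immediate: the factor $h_s^{-v(m-s+1-\alpha)}l(\mu n/h_s)^v$ does not involve $h_{m-r}$, so the lower-branch observation above applies and the inductive step reduces the upper limit of the product by one. For Case 2 ($s - t' + 1 < m-r \le s$), starting at $m-r=s$, I would apply Potter's theorem as in the $s=1$ subcase of the proof of Lemma \ref{JLemo2}: with Potter exponents summing to $\sigma_2$, bound both $l(\mu n/h_s)^v$ and the $l(h_s)$ emerging from $dF(h_s)$ by powers of $\sqrt{\mu n}/h_s$, which contributes the overhead factor $n^{\sigma_2/2} h_s^{-\sigma_2} l(\sqrt{\mu n})^{v+1}$ and leaves a purely polynomial integrand. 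Corollary \ref{JCor} can now be applied, and the defining property of $t'$ keeps us in its upper branch throughout the Case 2 range (since $\eta_{q+2}>0$ there); rewriting the resulting exponents via the identity $(t-v)(m-\alpha)+v(s-1) = \eta_t + \sigma_t$ produces the stated template at $q=0$. The inductive step $q\mapsto q+1$ is the same move: one step of Corollary \ref{JCor} in the upper branch followed by a further Potter application with parameter $\sigma_{q+3}-\sigma_{q+2}>0$ to absorb the emerging $l(h_{m-r-1})$ into $h_{m-r-1}^{-(\sigma_{q+3}-\sigma_{q+2})}$.

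Once $m-r$ drops to $s-t'+1$ we have $\eta_{t'}<0$ by definition of $t'$, so Corollary \ref{JCor} lands in its lower branch and the remaining power of $h_{m-r-1}$ is stripped without trace, leaving the bound $n^{\sigma_{t'}/2} l(\sqrt{n})^{v+t'-1}$; any further $J_{m-r,m}$ down to $m-r=1$ processes only factors $h_{m-r}^v$ and is handled as in Case 1. The main obstacle will be the simultaneous bookkeeping of the $\sigma_t$ sequence and the signs of $\eta_t$: one must verify that the monotonicity $0<\sigma_1<\sigma_2<\cdots<\sigma_s$ and the upper bound $\sigma_s < 2v(\alpha-m-v)+v(v-1)$ are sufficient to keep every Potter parameter positive and to select the correct branch of Corollary \ref{JCor} at each transition, particularly at $q+2=t'$ where the sign of $\eta_{q+2}$ flips.
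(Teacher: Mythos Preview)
Your plan is correct and follows essentially the same inductive approach as the paper. One small correction at the base step $m-r=s$ of Case~2: there is no $l(h_s)$ factor to bound there (the Case~1 integrations are all in the lower branch of Corollary~\ref{JCor} and produce no slowly varying factor), so the paper bounds only $l(\mu n/h_s)^v$ via Lemma~\ref{escapeLemma} with parameter $\gamma=\sigma_1/v$ so that the Corollary~\ref{JCor} branch check is exactly $\eta_1$, and only afterwards absorbs the resulting $l(h_{s-1})$ by a second escape-lemma application with parameter $\sigma_2-\sigma_1$; if you spend all of $\sigma_2$ before integrating, the branch check becomes $\eta_1-(\sigma_2-\sigma_1)$ and may have the wrong sign.
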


\begin{proof}
Since $v \geq 1$ and $\alpha > k = m + v$, we have $2v(\alpha-m-v)+v(v-1) > 0$. We also note that
\begin{align*}
    2v(\alpha-m-v)+v(v-1) - (s-v)(m-\alpha)+v(s-1) &= (v+s)(\alpha - m - v) > 0\,,
\end{align*}
and so the interval $\left(\max \left\{0, (s-v)(m-\alpha) + v(s-1) \right\},2v(\alpha-m-v)+v(v-1)\right)$ is not empty. Hence $\sigma_s$ (and by extension $\sigma_i$ for $1 \leq i \leq s-1$) is well-defined.

Noting that $\eta_{t+1}-\eta_t = (m-\alpha)+(\sigma_{t}-\sigma_{t+1})< 0$, we see that the $\eta_t$ form a decreasing sequence. We also see, by the definition of $\sigma_s$ that
\begin{align*}
    \eta_s = (s-v)(m-\alpha) + v(s-1) - \sigma_s <0\,.
\end{align*}
Hence the set $\left\{ t | 1 \leq t \leq s, \eta_t < 0 \right\}$ is non-empty, and $t' \leq s$. Since the $\sigma_i$ are arbitrary within an interval, we can make adjustments to them so that there does not exist a $t$ such that $\eta_t = 0$. 

We proceed again by induction on $r$. Using Corollary \ref{JCor} we see that, in the case $s\neq m$,
\begin{align*}
    \overline{J}_{m,m} \left( h_s^{-v(m-s+1-\alpha)} l \left( \frac{\mu n}{h_s} \right)^v \prod_{i=s+1}^{m} h_i^v \right) &= J_{m,m} \left( h_s^{-v(m-s+1-\alpha)} l \left( \frac{\mu n}{h_s} \right)^v \prod_{i=s+1}^{m} h_i^v \right)  \\
    &\asymp h_s^{-v(m-s+1-\alpha)} l \left( \frac{\mu n}{h_s} \right)^v \prod_{i=s+1}^{m-1} h_i^v\,.
\end{align*}
Similarly, for $r$ such that $s< m-r \leq m-1$ we have
\begin{align*}
    \overline{J}_{m-r,m} \left( h_s^{-v(m-s+1-\alpha)} l \left( \frac{\mu n}{h_s} \right)^v \prod_{i=s+1}^{m} h_i^v \right) &\asymp J_{m-r.m} \left( h_s^{-v(m-s+1-\alpha)} l \left( \frac{\mu n}{h_s} \right)^v \prod_{i=s+1}^{m-r} h_i^v \right) \\
    &\asymp h_s^{-v(m-s+1-\alpha)} l \left( \frac{\mu n}{h_s} \right)^v \prod_{i=s+1}^{m-r-1} h_i^v\,.
\end{align*}
Hence, the lemma holds for all $r$ such that $s < m-r \leq m$.

Define $\gamma = \frac{\sigma_1}{v} > 0 $. Note that
\begin{align*}
    -v(m-s+1-\alpha+\gamma) + m -\alpha &= (1-v)(m-\alpha)+v(s-1) - \sigma_1 \\
    &= \eta_1 \begin{cases} < 0 & \text{when } t' = 1\,, \\ > 0 & \text{when } t' \neq 1\,, \end{cases}
\end{align*}
so we can continue to use Corollary \ref{JCor} alongside Lemma \ref{escapeLemma} to see that
\begin{align*}
    &\overline{J}_{s,m} \left( h_s^{-v(m-s+1-\alpha)} l \left( \frac{\mu n}{h_s} \right)^v \prod_{i=s+1}^{m} h_i^v \right) \\ & \lesssim J_{s,m} \left( h_s^{-v(m-s+1-\alpha)} l \left( \frac{\mu n}{h_s} \right)^v \right) \\
   & \lesssim J_{s,m} \left( n^{\frac{v\gamma}{2}}h_s^{-v(m-s+1-\alpha+\gamma)} l \left( \sqrt{n} \right)^v\right) \\
   & \lesssim \begin{cases} n^{\frac{\sigma_1}{2}}l \left( \sqrt{n} \right)^v & \text{when } t' = 1\,, \\  n^{\frac{\sigma_1}{2}}h_{s-1}^{(1-v)(m-\alpha)+v(s-1)-\sigma_{1}} l \left( h_{s-1} \right) l \left( \sqrt{n} \right)^v & \text{when } t' \neq 1\,.
    \end{cases}
\end{align*}
The second line here is consistent with the case where $s=m$. Recall that $\sigma_{i+1}-\sigma_{i} >0$
so, in the case $t' \neq 1$, we may use Lemma \ref{escapeLemma} to see
\begin{align*}
    \overline{J}_{s,m} \left( h_s^{-v(m-s+1-\alpha)} l \left( \frac{\mu n}{h_s} \right)^v \prod_{i=s+1}^{m} h_i^v \right) 
    & \lesssim n^{\frac{\sigma_1}{2}}h_{s-1}^{(1-v)(m-\alpha)+v(s-1)-\sigma_{1}} l \left( h_{s-1} \right) l \left( \sqrt{n} \right)^v \\
   & \lesssim n^{\frac{\sigma_2}{2}}h_{s-1}^{(1-v) (m-\alpha)+v(s-1)-\sigma_{2}} l \left( \sqrt{n} \right)^{v+1}\,.
\end{align*}

Let $q = s-m+r$. For $r$ such that $s - t' + 1 < m-r \leq s$ we have $ 0 \leq q+1 < t'$. So we see that
\begin{align*}
    (q-v)(m-\alpha)+v(s-1) - \sigma_{q+1} + m - \alpha &= ((q+1)-v)(m-\alpha)+v(s-1) - \sigma_{q+1} \\
    &= \eta_{q+1} > 0\,.
\end{align*}
So, remaining in this case, and continuing to use Corollary \ref{JCor} and Lemma \ref{escapeLemma} we have
\begin{align*}
     &\overline{J}_{m-r,m} \left( h_s^{-v(m-s+1-\alpha)} l \left( \frac{\mu n}{h_s} \right)^v \prod_{i=s+1}^{m} h_i^v \right) \\ & \lesssim J_{m-r,m} \left(  n^{\frac{\sigma_{q+1}}{2}} l \left( \sqrt{n} \right)^{v+q} h_{m-r-1}^{(q-v)(m-\alpha)+v(s-1) - \sigma_{q+1}} \right) \\
    & \lesssim n^{\frac{\sigma_{q+1}}{2}} l \left( \sqrt{n} \right)^{v+q} h_{m-r-1}^{((q+1)-v)(m-\alpha)+v(s-1) - \sigma_{q+1}} l \left( h_{m-r-1}\right) \\
    & \lesssim n^{\frac{\sigma_{q+2}}{2}} l \left( \sqrt{n} \right)^{v+q+1} h_{m-r-1}^{((q+1)-v)(m-\alpha)+v(s-1) - \sigma_{q+2}}\,,
\end{align*}
and thus the lemma holds for all $r$ such that $s-t'+1< m-r \leq s$.

Recall that, by definition, $\eta_{t'} = (t'-v)(m-\alpha)+v(s-1) - \sigma_{t'} < 0$. Hence, noting that if $m-r = s - t' + 1$ then $q = t' - 1$, we see that
\begin{align*}
    &\overline{J}_{s-t'+1,m} \left( h_s^{-v(m-s+1-\alpha)} l \left( \frac{\mu n}{h_s} \right)^v \prod_{i=s+1}^{m} h_i^v \right) \\ & \lesssim J_{s-t'+1,m} \left( n^{\frac{\sigma_{t'}}{2}} l \left( \sqrt{n} \right)^{v+t'-1} h_{m-r-1}^{((t'-1-v)(m-\alpha)+v(s-1) - \sigma_{t'}} \right) \\
   & \lesssim n^{\frac{\sigma_{t'}}{2}} l \left( \sqrt{n} \right)^{v+t'-1}\,.
\end{align*}
Similarly, noting that $0+m-\alpha < 0$ we see that, for $r$ such that $1\leq m-r <s-t'+1$,
\begin{align*}
    \overline{J}_{m-r,m} \left( h_s^{-v(m-s+1-\alpha)} l \left( \frac{\mu n}{h_s} \right)^v \prod_{i=s+1}^{m} h_i^v \right) &\lesssim J_{m-r,m} \left( n^{\frac{\sigma_{t'}}{2}} l \left( \sqrt{n} \right)^{v+t'-1} h_{m-r}^0 \right) \\
    &\lesssim n^{\frac{\sigma_{t'}}{2}} l \left( \sqrt{n} \right)^{v+t'-1}\,.
\end{align*}
So, the lemma holds for all $r$ such that $1 \leq m-r \leq s-t'+1$ and the proof is complete.
\end{proof}

Now, returning to (\ref{fullExpandedSum}), in the case $k > \alpha$ we can use Lemmas \ref{JLemo1} and \ref{JLemo2} to see that
\begin{align*}
    &\overline{J}_{1,m} \left( \left( \int_{\sqrt{n}}^{\infty} \prod_{i=1}^m p_{i,j}\; dF(h_{j}) \right)^v \right) \\ 
    & \lesssim 
   C_0^v  n^{\frac{m+v}{2}(1-\alpha)-\frac{m}{2}(1-m)} l(\sqrt{n})^{m+v} 
    + \sum_{s=1}^{m} C_s^v n^{\frac{m+v}{2}(1-\alpha) - \frac{m}{2}(1-m)} l \left( \sqrt{n} \right)^{m+v} \\
    & \asymp n^{\frac{k}{2}(1-\alpha)-\frac{m}{2}(1-m)} l \left( \sqrt{n} \right)^k\,.
\end{align*}

In the case $k<\alpha$, we note two things. Firstly,
\begin{align*}
    \left( \frac{m+v}{2}(1-m-v) - \frac{m}{2}(1-m) \right) - \frac{v}{2}(1-m-\alpha) = \frac{v}{2}(\alpha - m - v) > 0\,,
\end{align*}
so $ \frac{v}{2}(1-m-\alpha) < \frac{m+v}{2}(1-m-v) - \frac{m}{2}(1-m)$ and, using Lemma \ref{powerDominatesSlowVary} alongside the property that if $l(x)$ is slowly-varying then so is $l(x)^a$ for any exponent $a$, we see that
\begin{align*}
    n^{\frac{v}{2}(1-m-\alpha)} l(\sqrt{\mu n})^v \asymp n^{\frac{v}{2}(1-m-\alpha)} l(\sqrt{n})^v \lesssim n^{\frac{m+v}{2}(1-m-v) - \frac{m}{2}(1-m)}\,.
\end{align*}
Secondly we note, for each fixed $s$, that $\frac{\sigma_{t'}}{2} < \frac{\sigma_s}{2} < v(\alpha - m - v) + \frac{v}{2}(v-1)$, and so
\begin{align*}
    n^{\frac{\sigma_{t'}}{2}}l \left( \sqrt{n} \right)^{v+t'-1} \lesssim n^{v(\alpha - m - v) + \frac{v}{2}(v-1)}\,,
\end{align*}
where we again used Lemma \ref{powerDominatesSlowVary} and the fact that if $l(x)$ is slowly-varying then so is $l(x)^a$. Hence, using Lemmas \ref{JLemo3} and \ref{JLemo4} on (\ref{fullExpandedSum}) we see that
\begin{align*}
    &\overline{J}_{1,m} \left( \left( \int_{\sqrt{n}}^{\infty} \prod_{i=1}^m p_{i,j}\; dF(h_{j}) \right)^v \right) \\ 
    & \lesssim  C_0^v  n^{\frac{v}{2}(1-m-\alpha)} l(\sqrt{\mu n})^v + \sum_{s=1}^{m} C_s^v n^{v(1-\alpha)} n^{\frac{\sigma_{t'}}{2}} l \left( \sqrt{n} \right)^{v+t'-1} \\
    & \lesssim  
    C_0^v  n^{\frac{m+v}{2}(1-m-v)-\frac{m}{2}(1-m)} + \sum_{s=1}^{m} C_s^v n^{\frac{m+v}{2}(1-m-v)-\frac{m}{2}(1-m)} \\
    & \asymp n^{\frac{k}{2}(1-k)-\frac{m}{2}(1-m)}\,.
\end{align*}

We see finally that
\begin{align*}
    I_m & \asymp n^{\frac{m}{2}(1-m)} \overline{J}_{1,m} \left( \left( \int_{\sqrt{n}}^{\infty} \prod_{i=1}^m p_{i,j}\; dF(h_{j}) \right)^v \right) \\
    & \lesssim \begin{cases} n^{\frac{k}{2}(1-k)} & \text{when } k < \alpha\,, \\ n^{\frac{k}{2}(1-\alpha)} l \left( \sqrt{n} \right)^k & \text{when } k > \alpha\,, \end{cases}
\end{align*}
which completes the proof of Theorem \ref{alphaNonInt} when combined with (\ref{eq:cond}) and our results on the extreme cases obtained above.

\section{Results for integer $\alpha$} \label{alphaIntSec}

In the case where $\alpha$ is an integer, the techniques used above in proving Theorem \ref{alphaNonInt} do not yield sharp asymptotics when $k \geq \alpha$; see, for example, the proof of Lemma \ref{middleExpansionLem} for one step of the proof which breaks down in the case where $\alpha$ is an integer.  We do, however, have asymptotic upper bounds:

\begin{theorem} \label{alphaInt}
 For $\alpha \in (2,\infty)\cap \mathbb{Z}$ and $k \geq 2$,
 \begin{align}
\mathbb{P} \left( K_k \right) \begin{cases} \asymp n^{\frac{k}{2}(1-k)} & \text{ when } k<\alpha\,, \\
    \lesssim n^{\frac{k}{2}(1-k)}Q(\sqrt{n})^{k-1} Q(n)& \text{ when } k=\alpha\,, \\
    \lesssim n^{\frac{k}{2}(1-\alpha)} l \left( \sqrt{n} \right)^{\alpha-1} Q(n)^{k-\alpha+1} & \text{ when } k>\alpha\,,
    \end{cases}
\end{align}
where
\begin{align}
    Q(x) = \int_1^{x} h^{\alpha-1}\ dF(h) 
    \label{QDef}
\end{align}
for $x\geq1$.
\end{theorem}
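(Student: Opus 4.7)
The plan is to follow the same decomposition (\ref{eq:cond}) as in Theorem \ref{alphaNonInt}, splitting $\mathbb{P}(K_k)$ according to how many of $H_1,\ldots,H_k$ exceed $\sqrt{\mu n}$. The essential new ingredient is that when $\alpha$ is an integer, the excluded case $\beta = \alpha - 1$ in Lemma \ref{fixedAsymps} can arise, and we replace the estimate $l(x(n))$ that would otherwise appear by
\[
\int_X^{x(n)} h^{\alpha-1}\, dF(h) = Q(x(n)) - Q(X) \asymp Q(x(n)),
\]
together with an analogous bound $\lesssim Q(y(n))$ replacing the corresponding term of (\ref{fixedNone}).

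For $k < \alpha$, every exponent appearing in the proof of Theorem \ref{alphaNonInt} is an integer strictly less than $\alpha - 1$, so no exceptional case is triggered and the non-integer argument carries through verbatim, giving $\mathbb{P}(K_k) \asymp n^{k(1-k)/2}$.

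For $k \geq \alpha$, I would treat each term of (\ref{eq:cond}) separately. The extreme case with all weights above $\sqrt{\mu n}$ is unchanged at $\asymp n^{k(1-\alpha)/2} l(\sqrt{n})^k$. The extreme case with all weights below reduces, as in (\ref{extremeCase2}), to $n^{k(1-k)/2}\bigl(\int_1^{\sqrt{\mu n}} h^{k-1}\, dF(h)\bigr)^k$; for $k = \alpha$ this integral equals $Q(\sqrt{\mu n}) \asymp Q(\sqrt{n})$, yielding $\asymp n^{k(1-k)/2} Q(\sqrt{n})^k$, while for $k > \alpha$ the exponent $k-1$ strictly exceeds $\alpha - 1$ so the standard estimate applies. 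The intermediate cases $I_m$ with $m > \alpha$ are handled as in the non-integer proof (the relevant exponents avoid $\alpha - 1$), giving $\lesssim n^{k(1-\alpha)/2} l(\sqrt{n})^k$, which is dominated by the claimed bound since $Q(n) \gtrsim l(\sqrt{n})$.

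The substantive new work lies in the intermediate cases $I_m$ with $m \leq \alpha$. I would adapt Lemma \ref{middleExpansionLem} by observing that the integrand $h_j^{m-s+1}$ in (\ref{middleExpansionIntegrals}) hits the exceptional exponent $\alpha - 1$ exactly when $s = m - \alpha + 2$, which requires $m \in \{\alpha - 1, \alpha\}$. For such $s$ the corresponding integral is bounded by $Q(\mu n / h_s) \lesssim Q(n)$, so one summand in the modified version of (\ref{middleExpansion}) has its slowly varying factor $l(\mu n / h_s)$ replaced by $Q(n)$, with all other summands retaining their original form. Raising this expansion to the $v$-th power via Lemma \ref{convexLemma} and applying $\overline{J}_{1,m}$, the $Q(n)$ factors propagate unchanged through the iterated integrations because they are constant in the $h_i$, while analogues of Lemmas \ref{JLemo1}--\ref{JLemo4} bound the remaining $h_i$-dependent factors. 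The dominant contribution comes from $m = \alpha - 1$, where selecting the exceptional summand in all $v = k - \alpha + 1$ factors of the $v$-th power produces $Q(n)^{k-\alpha+1}$, combined with the $l(\sqrt{n})^{\alpha-1}$ arising from the iterated $J$-applications, matching the stated bound for $k > \alpha$; for $k = \alpha$ the single intermediate value $m = \alpha - 1$ yields a single extra $Q(n)$ factor that multiplies $Q(\sqrt{n})^{k-1}$ inherited from the extreme case. The principal obstacle will be bookkeeping these $Q$ and $l$ factors carefully through every cross-term that arises when Lemma \ref{middleExpansionLem}'s expansion is raised to the $v$-th power, verifying that mixed selections (some factors exceptional, others not) do not exceed the claimed asymptotic rate.
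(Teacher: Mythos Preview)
Your proposal is correct and follows essentially the same approach as the paper's outline: reuse the non-integer proof verbatim for $k<\alpha$, locate the only breakdown in Lemma~\ref{middleExpansionLem} at the integrals with exponent $m-s+1=\alpha-1$ (forcing $m\in\{\alpha-1,\alpha\}$), and bound that exceptional integral crudely by $Q(\mu n/h_s)\le Q(\mu n)\asymp Q(n)$ before propagating through $\overline{J}_{1,m}$. Your identification of $m=\alpha-1$ as the term that saturates the $Q(n)^{k-\alpha+1}l(\sqrt n)^{\alpha-1}$ bound is accurate and somewhat more explicit than what the paper sketches.

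Two minor points worth tightening. First, the factor $Q(\sqrt n)^{k-1}$ in the $k=\alpha$ bound is not ``inherited from the extreme case'' but arises inside $\overline{J}_{1,m}$ itself: with $m+v=k=\alpha$ every iterated integral $\int_1^{h_{i-1}} h_i^{m-1+v}\,dF(h_i)$ has exponent $\alpha-1$ and produces a $Q(h_{i-1})\le Q(\sqrt{\mu n})$ factor. Second, for $k=\alpha$ the intermediate cases with $m<\alpha-1$ avoid the exceptional exponent in Lemma~\ref{middleExpansionLem} but still hit $\alpha-1$ in the $\overline{J}$ iteration for the same reason, so Lemmas~\ref{JLemo1}--\ref{JLemo4} (which assume $k\ne\alpha$) do not apply directly and you will need the $Q(\sqrt n)$ replacement there too. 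The paper's outline is equally silent on this, and both it and your proposal correctly flag the residual bookkeeping of cross-terms as the remaining labour.
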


Notably, the asymptotics seen in Theorem \ref{alphaNonInt} are still lower bounds for the asymptotics of $\mathbb{P}\left( K_k \right)$. The corresponding lower bound for the $k=\alpha$ case (found from the extreme case where every node has weight less than $\sqrt{\mu n}$) is $n^{\frac{k}{2}(1-k)}Q(\sqrt{n})^{k}$.

We note the following useful properties of $Q(x)$:
\begin{enumerate}
    \item $Q(x)$ is positive and non-decreasing,
    \item $Q(x)$ is slowly-varying, and
    \item $l(x) \lesssim Q(x)$.
\end{enumerate}
Closed forms for the asymptotics of $Q(x)$ for a wide class of slowly-varying functions $l(h)$ can be derived from Polfeldt's results in \cite{QAsymps}, but a general form for all $l(h)$ is yet to be found.

The full proof of Theorem \ref{alphaInt} is omitted here for reasons of brevity, but we shall outline key steps in its proof. 

A careful analysis of the proof of Theorem \ref{alphaNonInt} reveals that the only point where we used the assumption that $\alpha$ was non-integer (other than the assumption that $k \neq \alpha$) was in Lemma \ref{middleExpansionLem}, where we used it to assert that $m - s + 1 - \alpha \neq - 1$. Note that if both $m$ and $s$ are integers, with $1 \leq m \leq \alpha$ and $1 \leq s \leq m$, $m - s + 1 - \alpha = - 1$ only in the cases $m = \alpha$ ($s = 2$) and $m = \alpha - 1$ ($s = 1$). Neither of these cases can arise when $k < \alpha$, so there the proof of Theorem \ref{alphaNonInt} applies and we obtain a sharp asymptotic for $\mathbb{P}\left( K_k \right)$.

In the remaining cases we may use the fact that $Q(x)$ is positive and non-decreasing to write
\begin{equation*}
    \int_{\frac{\mu n}{h_{s-1}}}^{\frac{\mu n}{h_s}} h_j^{m-s+1}\; dF(h_j) = Q \left( \frac{\mu n}{h_s} \right) - Q \left( \frac{\mu n}{h_{s-1}} \right) 
    \leq Q \left( \frac{\mu n}{h_s} \right) 
    \leq Q(\mu n) 
    \asymp Q(n)\,.
\end{equation*}
From this relation the asymptotic upper bounds seen in Theorem \ref{alphaInt} arise. Note that all the remaining terms (including in the omitted case $k=\alpha$) are asymptotically bounded above by the extreme case where every node has weight at most $\sqrt{\mu n}$. 

Setting $l(x)$ to be a specific function sheds more light on the situation. In both examples below it is assumed that $\alpha$ is some fixed integer.

When $l(x) = 1$, we have $Q(x) = (\alpha - 1) \log(x) \asymp \log(x)$ and
    \begin{align*}
        \mathbb{P} \left( K_k \right) \asymp \begin{cases} n^{\frac{k}{2}(1-k)} & \text{when } k < \alpha\,, \\ n^{\frac{k}{2}(1-k)}\log ( \sqrt{n} )^{k} & \text{when } k = \alpha\,, \\ n^{\frac{k}{2}(1-\alpha)} & \text{when } k > \alpha\,. \end{cases}
    \end{align*}
    
When $l(x) = \log(x)$, we have $Q(x) = \frac{\alpha - 1}{2} \log(x)^2 - \log(x) \asymp \log(x)^2$ and
    \begin{align*}
        \mathbb{P} \left( K_k \right)  \begin{cases} \asymp n^{\frac{k}{2}(1-k)} & \text{when } k < \alpha\,, \\  \asymp n^{\frac{k}{2}(1-k)} \log(\sqrt{n})^{2k} & \text{when } k = \alpha\,, \\ \lesssim n^{\frac{k}{2}(1-\alpha)} \log(\sqrt{n})^{2k-\alpha-1} & \text{when } k > \alpha\,. \end{cases}
    \end{align*}
    
The case where $k > \alpha$ and $l(x) = \log (x)$ remains as an upper bound since it is unproven whether that bound is attained in all cases. However, using computer methods to fully expand the cumbersome terms appearing in calculations it can be shown that it is attained for certain (relatively) small values of $k$ and $\alpha$.

These examples show that the upper bound presented in Theorem \ref{alphaInt} is not always attained, but asymptotics for $\mathbb{P}\left( K_k \right)$ when $\alpha$ is integer may exceed the corresponding asymptotics in the case where $\alpha$ is non-integer, though they are not guaranteed to do so.

\appendix
\section{Proofs of lemmas in Section \ref{sec:lemmas}} \label{InitProofs}
\begin{proof}[Proof of Lemma \ref{fixedAsymps}]
We first note that
\begin{align*}
    \frac{dF(h)}{dh} = \frac{d}{dh} (1-\overline{F}(h)) = \frac{d}{dh} (1-h^{1-\alpha}l(h)) = \frac{d}{dh} (-h^{1-\alpha}l(h))\,.
\end{align*}
So, starting with (\ref{fixedBottom}), we can integrate by parts to see that 
\begin{align*}
     \int_X^{x(n)} h^\beta\ dF(h) = X^{\beta-\alpha+1}l(X) - x^{\beta-\alpha+1}l(x) + \beta \int_X^{x(n)} h^{\beta-\alpha}l(h)\ dh\,.
\end{align*}
In the case that $\beta-\alpha > -1$ we use \cite[Proposition 1.5.8]{bingham_goldie_teugels_1987} to see that the integral on the right-hand side is asymptotically equivalent to
\begin{align*}
     \beta \int_X^{x(n)} h^{\beta-\alpha}l(h)\ dh \sim \frac{\beta}{\beta-\alpha+1} x^{\beta-\alpha+1} l(x)\,,
\end{align*}
and using \cite[Theorem 1.5.4]{bingham_goldie_teugels_1987} we see that $x^{\beta-\alpha+1} l(x)$ is asymptotic to a non-decreasing function, so it dominates the constant term. Hence the first part of (\ref{fixedBottom}) holds.

In the case $\beta-\alpha < -1$, we first observe, again by \cite[Theorem 1.5.4]{bingham_goldie_teugels_1987}, that $x^{\beta-\alpha+1} l(x)$ is asymptotic to a non-increasing function, and hence is dominated by the constant term $X^{\beta-\alpha+1}l(X)$. For the integral term we write
\begin{align*}
     \beta \int_X^{x(n)} h^{\beta-\alpha}l(h)\ dh = \beta \int_X^{\infty} h^{\beta-\alpha}l(h)\ dh - \beta \int_x^{\infty} h^{\beta-\alpha}l(h)\ dh\,.
\end{align*}
From \cite[Proposition 1.5.10]{bingham_goldie_teugels_1987} we see that the first term here converges (and hence is a well-defined constant) and the second term is asymptotically equivalent to $x^{\beta-\alpha+1}l(x)$, and hence dominated by the constant. Hence the second part of (\ref{fixedBottom}) holds.

For (\ref{fixedNone}), integrating by parts gives
\begin{align*}
    \int_{x(n)}^{y(n)} h^\beta\ dF(h) = y^{\beta-\alpha+1}l(y) - x^{\beta-\alpha+1}l(x) + \beta \int_{x}^{y}h^{\beta-\alpha}l(h)\ dh\,.
\end{align*}
By \cite[Theorem 1.5.4]{bingham_goldie_teugels_1987}, if $\beta-\alpha>-1$ then $\phi(h) = h^{\beta-\alpha+1}l(h)$ is asymptotic to a non-decreasing function, so $\phi(y)$ dominates $\phi(x)$. By the same theorem, if $\beta-\alpha<-1$ then $\phi(h)$ is asymptotic to a non-increasing function, so $\phi(x)$ dominates $\phi(y)$. 

We now consider the integral term. If $\beta-\alpha>-1$, then we write
\begin{align*}
    \int_{x}^{y}h^{\beta-\alpha}l(h)\ dh =  \int_{h_0}^{y}h^{\beta-\alpha}l(h)\ dh -  \int_{h_0}^{x}h^{\beta-\alpha}l(h)\ dh\,.
\end{align*}
From this we use \cite[Proposition 1.5.8]{bingham_goldie_teugels_1987} to see that the first term is asymptotic to $\phi(y)$ and the second is asymptotic to $\phi(x)$ (which is dominated by $\phi(y)$). Hence the first part of (\ref{fixedNone}) holds.

Similarly, if $\beta-\alpha<-1$, then we write
\begin{align*}
     \int_{x}^{y}h^{\beta-\alpha}l(h)\ dh =  \int_{x}^{\infty}h^{\beta-\alpha}l(h)\ dh -  \int_{y}^{\infty}h^{\beta-\alpha}l(h)\ dh\,.
\end{align*}
From this we use \cite[Proposition 1.5.10]{bingham_goldie_teugels_1987} to see that the first term is asymptotic to $\phi(x)$ and the second is asymptotic to $\phi(y)$ (which is dominated by $\phi(x)$). Hence the second part of (\ref{fixedNone}) holds.
\end{proof}

\begin{proof}[Proof of Corollary \ref{JCor}]
Observe that, since both $B$ and $h_i^\beta$ are positive for $h_i \in [1,h_{i-1}]$,
\begin{align*}
    J_{i,m} (g) \begin{Bmatrix} \asymp \\ \text{or} \\ \lesssim \end{Bmatrix} & \int_1^{h_{i-1}} B(n,h_1,h_2,\ldots,h_{i-1})h_i^{\beta+m-1} \ dF(h_i) \\
    =\quad & B(n,h_1,h_2,\ldots,h_{i-1}) \int_1^{h_{i-1}} h_i^{\beta+m-1} \ dF(h_i)\,.
\end{align*}
The result then follows from Lemma \ref{fixedAsymps}.
\end{proof}

\begin{proof}[Proof of Lemma \ref{escapeLemma}]
By \cite[Theorem 1.5.4]{bingham_goldie_teugels_1987} there exists a non-decreasing function $\phi(x)$ and a non-increasing function $\psi(x)$ such that
\begin{align*}
h_i^\gamma l \left( h_i\right) \sim \phi(h_i) \leq \phi (\sqrt{\mu n}) \sim \left(\mu n \right)^{\frac{\gamma}{2}} l \left( \sqrt{\mu n}\right) \asymp n^{\frac{\gamma}{2}} l \left( \sqrt{n}\right)\,,   
\end{align*}
and
\begin{align*}
    \frac{n^{-\gamma}}{h_i^{-\gamma}} l \left( \frac{\mu n}{h_i} \right) \sim \mu^{\gamma} \psi \left( \frac{\mu n}{h_i} \right) \leq \mu^{\gamma}  \psi \left( \sqrt{\mu n} \right) &\sim \mu^{\frac{\gamma}{2}} n^{-\frac{\gamma}{2}} l \left( \sqrt{\mu n} \right)\asymp n^{-\frac{\gamma}{2}} l \left( \sqrt{n} \right)\,.
\end{align*}
\end{proof}

In conclusion, we note that Lemma \ref{powerDominatesSlowVary} is a direct corollary of \cite[Theorem 1.5.4]{bingham_goldie_teugels_1987} and Lemma \ref{convexLemma} follows immediately from the convexity of the function $g(x)=x^v$.

%

\bibliographystyle{abbrv}
\bibliography{biblio}

\end{document}